\newtheorem{thm}{Theorem}[section]
\newtheorem{lem}[thm]{Lemma}
\newtheorem{mainthm}[thm]{Main Theorem}
\theoremstyle{definition}
\newtheorem{defin}[thm]{Definition}
\numberwithin{equation}{section}
\def\qed{\vbox{\hrule
 \hbox{\vrule\hbox to 5pt{\vbox to 8pt{\vfil}\hfil}\vrule}\hrule}}
\def\({\left(}
\def\){\right)}
\newcommand{\labt}[1]{\label{thm:#1}}
\newcommand{\labmt}[1]{\label{mthm:#1}}
\newcommand{\refmt}[1]{Main Theorem~\ref{mthm:#1}}
\newcommand{\labl}[1]{\label{lemma:#1}}
\newcommand{\refl}[1]{Lemma~\ref{lemma:#1}}
\newcommand{\labd}[1]{\label{definition:#1}}
\newcommand{\refd}[1]{Definition~\ref{definition:#1}}
\begin{document}


\baselineskip=17pt


\title{Construction of normal numbers with respect to the $Q$-Cantor series expansion for certain $Q$}

\author{Bill Mance\\
Department of Mathematics, The Ohio State University\\
231 West 18th Avenue\\
Columbus, OH 43210-1174\\
E-mail: mance@math.ohio-state.edu}

\date{}

\maketitle


\renewcommand{\thefootnote}{}

\footnote{2010 \emph{Mathematics Subject Classification}: Primary 11K16; Secondary 11A63.}

\footnote{\emph{Key words and phrases}: Cantor series, Normal numbers.}

\renewcommand{\thefootnote}{\arabic{footnote}}
\setcounter{footnote}{0}


\begin{abstract}
A. R\'enyi \cite{Renyi} made a definition that gives one generalization of simple normality in the context of $Q$-Cantor series.  Similarly, in this paper we give a definition which generalizes the notion of normality in the context of $Q$-Cantor series.  We will prove a theorem that allows us to concatenate sequences of digits that have a special property to give us the digits of a $Q$-normal number for certain $Q$.  We will then use this theorem to construct a Q and a real number $x$ that is $Q$-normal.
\end{abstract}

\section{Introduction}

\begin{defin}\labd{def1.1}
A {\it block of length $k$ in base $b$} is an ordered $k$-tuple of integers in $\{0,1,\ldots,b-1\}$.  A {\it block of length $k$} will be understood to be a block of length $k$ in some base $b$.  A {\it block} will mean a block of length $k$ in base $b$ for some integers $k$ and $b$.
\end{defin}

Given a block $B$, $|B|$ will represent the length of $B$.  Given blocks $B_1,B_2,\ldots,B_n$ and integers $l_1,l_2,\ldots,l_n$, the block
\begin{equation}
B=l_1B_1 l_2B_2 \ldots l_n B_n
\end{equation}
will be the block of length $l_1 |B_1|+\ldots+l_n |B_n|$ formed by concatenating $l_1$ copies of $B_1$, $l_2$ copies of $B_2$, all the way up to $l_n$ copies of $B_n$.  For example, if $B_1=(2,3,5)$ and $B_2=(0,8)$ then $2B_1 1B_2=(2,3,5,2,3,5,0,8)$.

\begin{defin}\labd{def1.2} Given an integer $b \geq 2$, the {\it $b$-ary expansion} of a real $x$ in $[0,1)$ will be the (unique) expansion of the form
\begin{equation} \label{eqn:bary} 
x=\sum_{n=1}^{\infty} \frac {E_n} {b^n}=0.E_1 E_2 E_3 \ldots
\end{equation}
such that all $E_n$ can take on the values $0,1,\ldots,b-1$ with $E_n \neq b-1$ infinitely often.
\end{defin}

We will let $N_n^b(B,x)$ denote the number of times a block $B$ occurs with starting position no greater than $n$ in the $b$-ary expansion of $x$.

\begin{defin}\labd{def1.3} A real number $x$ in $[0,1)$ is {\it normal in base $b$} if for all $k$ and blocks $B$ in base $b$ of length $k$,
\begin{equation} \label{eqn:bnormal1}
\lim_{n \rightarrow \infty} \frac {N_n^{b}(B,x)} {n}=b^{-k}.
\end{equation}
A number is {\it simply normal in base $b$} if $(\ref{eqn:bnormal1})$ holds for $k=1$.
\end{defin}

 Borel introduced normal numbers in 1909 and proved that Lebesgue almost every real number in $[0,1)$ is simultaneously normal to all bases.  The best known example of a number normal in base $10$ is due to Champernowne \cite{Champernowne}.  The number
\begin{equation*}
H_{10}=0.1 \ 2 \ 3 \ 4 \ 5 \ 6 \ 7 \ 8 \ 9 \ 10  \ 11 \ 12 \ldots ,
\end{equation*}
formed by concatenating the digits of every natural number written in increasing order in base $10$, is normal in base $10$.  Any $H_b$, formed similarly to $H_{10}$ but in base $b$, is known to be normal in base $b$. There have since been many examples given of numbers that are normal in at least one base.  One  can find a more thorough literature review in \cite{DT} and \cite{KuN}.

The $Q$-Cantor series expansion, first studied by Georg Cantor, is a natural generalization of the $b$-ary expansion.

\begin{defin}\labd{def1.4}
$Q=\{q_n\}_{n=1}^{\infty}$ is a {\it basic sequence} if each $q_n$ is an integer greater than or equal to $2$.
\end{defin}

\begin{defin}\labd{def1.5}
Given a basic sequence $Q$, the {\it $Q$-Cantor series expansion} of a real $x$ in $[0,1)$ is the (unique) expansion of the form

\begin{equation} \label{eqn:cseries}
x=\sum_{n=1}^{\infty} \frac {E_n} {q_1 q_2 \ldots q_n}
\end{equation}
such that $E_n$ can take on the values $0,1,\ldots,q_n-1$ with $E_n \neq q_n-1$ infinitely often.\footnote{Uniqueness can be proven in the same way as for the $b$-ary expansion.}
\end{defin}

Clearly, the $b$-ary expansion is a special case of $(\ref{eqn:cseries})$ where $q_n=b$ for all $n$.  If one thinks of a $b$-ary expansion as representing an outcome of repeatedly rolling a fair $b$-sided die, then a $Q$-Cantor series expansion may be thought of as representing an outcome of rolling a fair $q_1$ sided die, followed by a fair $q_2$ sided die and so on.  For example, if $q_n=n+1$ for all $n$ then the $Q$-Cantor series expansion of $e-2$ is
\begin{equation*}
e-2=\frac{1} {2}+\frac{1} {2 \cdot 3}+\frac{1} {2 \cdot 3 \cdot 4}+\ldots
\end{equation*}
If $q_n=10$ for all $n$, then the $Q$-Cantor series expansion for $1/4$ is
\begin{equation*}
\frac {1} {4}=\frac{2} {10}+\frac {5} {10^2}+\frac {0} {10^3}+\frac {0} {10^4}+\ldots
\end{equation*}

For a given basic sequence $Q$, let $N_n^Q(B,x)$ denote the number of times a block $B$ occurs starting at a position no greater than $n$ in the $Q$-Cantor series expansion of $x$. Additionally, define
\begin{equation} \label{eqn:Qnk}
Q_n^{(k)}=\sum_{j=1}^n \frac {1} {q_j q_{j+1} \ldots q_{j+k-1}}.
\end{equation}

A. R\'enyi\cite{Renyi} defined a real number $x$ to be normal if for all blocks $B$ of length $1$,
\begin{equation}\label{eqn:rnormal}
\lim_{n \rightarrow \infty} \frac {N_n^Q (B,x)} {Q_n^{(1)}}=1.
\end{equation}
If $q_n=b$ for all $n$ then $(\ref{eqn:rnormal})$ is equivalent to simply normal in base $b$, but not equivalent to normal in base $b$.  Thus, we wish to generalize normality in a way that will be equivalent to normality in base $b$ when all $q_n=b$.

\begin{defin}\labd{def1.6}
A basic sequence $Q$ is {\it infinite limit} if $q_n \rightarrow \infty$.
\end{defin}

\begin{defin}\labd{def1.7}
A real number $x$ is {\it $Q$-normal of order $k$} if for all blocks $B$ of length $k$,
\begin{equation}
\lim_{n \rightarrow \infty} \frac {N_n^Q (B,x)} {Q_n^{(k)}}=1.
\end{equation}
$x$ is said to be {\it $Q$-normal} if it is $Q$-normal of order $k$ for all $k$.
\end{defin}

\begin{defin}\labd{def1.8}
A basic sequence $Q$ is {\it $k$-divergent} if
\begin{equation}
\lim_{n \rightarrow \infty} Q_n^{(k)}=\infty.
\end{equation}
$Q$ is {\it fully divergent} if $Q$ is $k$-divergent for all $k$.
\end{defin}

It has been shown that for infinite limit $Q$, the set of all $x$ in $[0,1)$ that are $Q$-normal of order $k$ has full Lebesgue measure if and only if $Q$ is $k$-divergent \cite{Renyi}.  Therefore, for infinite limit $Q$, the set of  all $x$ in $[0,1)$ that are $Q$-normal has full Lebesgue measure if and only if $Q$ is fully divergent.  Similarly to the case of the $b$-ary expansion, it will be more difficult to construct specific examples of $Q$-normal numbers than to show the typical real number is $Q$-normal.

The situation is further complicated when $Q$ is infinite limit because in that case we need to consider blocks whose digits come from an infinite set.  For example, normality can be defined for the continued fraction expansion.  In that setup there will also be an infinite digit set.  While it is known that almost every real number is normal with respect to the continued fraction expansion, there are not many known examples (see \cite{AKS} and \cite{PostPyat}).

We wish to state a theorem that will allow us to construct specific examples of $Q$-normal numbers for certain $Q$.  We will first need several definitions.

\begin{defin}\labd{def1.9}
\footnote{\cite{Post} discusses normality in base $2$ with respect to different weightings.} A {\it weighting} $\mu$ is a collection of functions $\mu^{(1)},\mu^{(2)},\mu^{(3)},\ldots$ such that for all $k$
\begin{equation}
\mu^{(k)}:\{0,1,2,\ldots\}^k \rightarrow [0,1];
\end{equation}
\begin{equation}
\sum_{j=0}^{\infty} \mu^{(1)}(j)=1;
\end{equation}
\begin{equation}
\mu^{(k)}(b_1,b_2,\ldots,b_k)=\sum_{j=1}^{\infty} \mu^{(k+1)}(b_1,b_2,\ldots,b_k,j).
\end{equation}
\end{defin}

\begin{defin}\labd{def1.10}
The {\it uniform weighting in base $b$} is the collection $\lambda_b$ of functions  $\lambda_b^{(1)},\lambda_b^{(2)},\lambda_b^{(3)},\ldots$ such that for all $k$ and blocks $B$ of length $k$ in base $b$
\begin{equation}
\lambda_b^{(k)}(B)=b^{-k}.
\end{equation}
\end{defin}

\begin{defin}\labd{def1.11}
Let $p$ and $b$ be positive integers such that $1 \leq p \leq b$.  A weighting $\mu$ is {\it $(p,b)$-uniform} if for all $k$ and blocks $B$ of length $k$ in base $p$, we have
\begin{equation}
\mu^{(k)}(B)=\lambda_b^{(k)}(B)=b^{-k}.
\end{equation}
\end{defin}

Given blocks $B$ and $y$ we will let $N_n(B,y)$ denote the number of times a block $B$ occurs starting in position no greater than $n$ in the block $y$.

\begin{defin}\labd{def1.12}
Suppose that $0<\epsilon < 1$, $k$ is a positive integer and $\mu$ is a weighting.  A block of digits $y$ is {\it $(\epsilon,k,\mu)$-normal }\footnote{\refd{def1.12} is a generalization of the concept of $(\epsilon,k)$-normality, originally due to Besicovitch \cite{Besicovitch}.} if for all blocks $B$ of length $m \leq k$, we have
\begin{equation}
\mu^{(m)}(B)|y|(1-\epsilon) \leq N_{|y|}(B,y) \leq \mu^{(m)}(B)|y|(1+\epsilon).
\end{equation}
\end{defin}

For convenience, we define the notion of a block friendly family (BFF):

\begin{defin}\labd{def1.13}
A {\it BFF} is a sequence of $6$-tuples $W=\{(l_i,b_i,p_i,\epsilon_i,k_i,\mu_i)\}_{i=1}^{\infty}$ with non-decreasing sequences of non-negative integers $\{l_i\}_{i=1}^{\infty}$, $\{b_i\}_{i=1}^{\infty}$, $\{p_i\}_{i=1}^{\infty}$ and $\{k_i\}_{i=1}^{\infty}$ for which $b_i \geq 2$, $b_i \rightarrow \infty$ and $p_i \rightarrow \infty$, such that $\{\mu_i\}_{i=1}^{\infty}$ is a sequence of $(p_i,b_i)$-uniform weightings and $\{\epsilon_i\}_{i=1}^{\infty}$ strictly decreases to $0$.
\end{defin}

We will use the notation
\begin{equation*}
f(n)=\omega(g(n))
\end{equation*}
to mean that $f$ asymptotically dominates $g$.  In other words,
\begin{equation*}
\lim_{n \rightarrow \infty} \frac {f(n)} {g(n)} = \infty.
\end{equation*}

\begin{defin}\labd{def1.14}
Let $W=\{(l_i,b_i,p_i,\epsilon_i,k_i,\mu_i)\}_{i=1}^{\infty}$ be a BFF.  If $\lim k_i=K<\infty$, then let $R(W)=\{0,1,2,\ldots,K\}$. Otherwise, let $R(W)=\{0,1,2,\ldots\}$. If $\{x_i\}_{i=1}^{\infty}$ is a sequence of blocks such that $|x_i|$ is non-decreasing and $x_i$ is $(\epsilon_i,k_i,\mu_i)$-normal, then $\{x_i\}_{i=1}^{\infty}$ is said to be {\it $W$-good} if for all $k$ in $R$,
\begin{equation}\label{eqn:good1}
|x_i| = \omega \left(\frac {b_i^k} {\epsilon_{i-1}-\epsilon_i} \right);
\end{equation}
\begin{equation}\label{eqn:good2}
\frac {l_{i-1}} {l_i} \cdot \frac {|x_{i-1}|} {|x_i|}=o(i^{-1}b_i^{-k});
\end{equation}
\begin{equation}\label{eqn:good3}
\frac {1} {l_i} \cdot \frac {|x_{i+1}|} {|x_i|}=o(b_i^{-k}).
\end{equation}
\end{defin}

For the rest of the paper, given a BFF $W$ and a $W$-good sequence $\{x_i\}$, we will define
\begin{equation}
L_i=|l_1 x_1 \ldots l_i x_i|=\sum_{j=1}^i l_j |x_j|=l_1 |x_1|+\ldots+l_i |x_i|,
\end{equation}
\begin{equation}
q_n=b_i \textrm{\ for $L_{i-1} < n \leq L_i$},
\end{equation}
and
\begin{equation}
Q=\{q_n\}_{n=1}^{\infty}.
\end{equation}
Moreover, if $(E_1,E_2,\ldots)=l_1x_1 l_2 x_2 \ldots$ then let
\begin{equation}
x=\sum_{n=1}^{\infty} \frac {E_n} {q_1 q_2 \ldots q_n}.
\end{equation}

With these conventions, we are now in a position to state \refmt{thm3.1}.

\begin{mainthm}\labmt{thm3.1}
Let $W$ be a BFF and $\{x_i\}_{i=1}^{\infty}$ a $W$-good sequence. If $k \in R(W)$, then $x$ is $Q$-normal of order $k$.  If $k_i \rightarrow \infty$, then $x$ is $Q$-normal.
\end{mainthm}

Let $C_{b,w}$ be the block formed by concatenating all the blocks of length $w$ in base $b$ in lexicographic order.  For example,
$$
C_{3,2}=1(0,0)1(0,1)1(0,2)1(1,0)1(1,1)1(1,2)1(2,0)1(2,1)1(2,2)
$$
$$
=(0,0,0,1,0,2,1,0,1,1,1,2,2,0,2,1,2,2).
$$
Let $x_1=(0)$, $b_1=2$ and $l_1=0$.  For $i \geq 2$, let $x_i=C_{i,i^2}$, $b_i=i$ and $l_i=i^{3i}$.  We will show in section 4 that $x$ is $Q$-normal.\footnote{This result will not require the full generality of $(p,b)$-uniform weightings considered in \refmt{thm3.1}, but they will be required in a later paper.}

\section{Technical Lemmas}

For this section, we will fix a BFF $W$ and a $W$-good sequence $\{x_i\}$.  For a given $n$, the letter $i=i(n)$ will always be understood to be the positive integer that satisfies $L_{i-1} < n \leq L_i$.  This usage of $i$ will be made frequently and without comment.  Let $m=n-L_i$, which allows $m$ to be written in the form
\begin{equation*}
m=\alpha |x_{i+1}|+\beta
\end{equation*}
where $\alpha$ and $\beta$ satisfy
\begin{equation*}
0 \leq \alpha \leq l_{i+1} \textrm{ and } 0 \leq \beta < |x_{i+1}|.
\end{equation*}

Thus, we can write the first $n$ digits of $x$ in the form

\begin{equation}
l_1 x_1 l_2 x_2 \ldots l_{i-1} x_{i-1} \ l_i x_i \ \alpha x_{i+1} \ 1y, 
\end{equation}
where $y$ is the block formed from the first $\beta$ digits of $x_{i+1}$.

Given a block $B$ of length $k$ in $R(W)$, we will first get upper and lower bounds on $N_n^Q(B,x)$, which will hold for all $n$ large enough that $k \leq k_i$.  This will allow us to bound 

\begin{equation}\label{eqn:bnd}
\left| \frac {N_n^Q(B,x)} {Q_n^{(k)}} -1 \right|
\end{equation}
and show that 
\begin{equation}
\lim_{n \rightarrow \infty} \frac {N_n^Q(B,x)} {Q_n^{(k)}}=1.
\end{equation}

We will arrive at upper and lower bounds for $N_n^Q(B,x)$ by breaking the first $n$ digits of $x$ into three parts: the initial block $l_1 x_1 l_2 x_2 \ldots l_{i-1} x_{i-1}$, the middle block $l_i x_i$ and the last block $\alpha x_{i+1} \ 1y$.

\begin{lem}\labl{l2.1}
If $k \leq k_i$ and $B$ is a block of length $k$ in base $b \leq p_i$, then the following bounds hold:
\begin{equation}
(1-\epsilon_i)b_i^{-k}|x_i| \leq N_{|x_i|}(B,x_i) \leq (1+\epsilon_i)b_i^{-k}|x_i|;
\end{equation}
\begin{equation}
(1-\epsilon_{i+1})b_{i+1}^{-k} \alpha |x_{i+1}| \leq N_m(B,l_{i+1}x_{i+1})  \leq (1+\epsilon_{i+1})b_{i+1}^{-k} \alpha |x_{i+1}|+\beta+k\alpha.
\end{equation}
\end{lem}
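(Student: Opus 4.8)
The plan is to exploit the fact that the block $x_i$ is $(\epsilon_i,k_i,\mu_i)$-normal and that $\mu_i$ is a $(p_i,b_i)$-uniform weighting, so that counting occurrences of a block $B$ of length $k \le k_i$ in base $b \le p_i$ inside $x_i$ is governed by $\mu_i^{(k)}(B) = b_i^{-k}$. First I would invoke \refd{def1.12}: since $k \le k_i$ and $B$ is a block of length $k$ in base $p_i$ (every digit of $B$ is less than $b \le p_i$, so $B$ is also a block in base $p_i$), we get immediately
\begin{equation*}
\mu_i^{(k)}(B)|x_i|(1-\epsilon_i) \le N_{|x_i|}(B,x_i) \le \mu_i^{(k)}(B)|x_i|(1+\epsilon_i),
\end{equation*}
and then \refd{def1.11} gives $\mu_i^{(k)}(B) = \lambda_{b_i}^{(k)}(B) = b_i^{-k}$, which is exactly the first pair of inequalities. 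This part is essentially a direct unwinding of definitions.

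For the second pair of bounds, the point is that $l_{i+1}x_{i+1}$ is $\alpha$ copies of $x_{i+1}$ (possibly followed by a tail $y$, though $N_m$ only counts starting positions up to $m = \alpha|x_{i+1}| + \beta$). The occurrences of $B$ in this concatenation split into two kinds: those lying entirely within a single copy of $x_{i+1}$, and those straddling a boundary between two consecutive copies (or running off the counted region into the $\beta$-digit tail). For the within-a-copy occurrences there are $\alpha$ copies, each contributing $N_{|x_{i+1}|}(B,x_{i+1})$, which by the first part of the lemma (applied with $i$ replaced by $i+1$; note $k \le k_i \le k_{i+1}$ and $b \le p_i \le p_{i+1}$ since these sequences are non-decreasing) is between $(1-\epsilon_{i+1})b_{i+1}^{-k}|x_{i+1}|$ and $(1+\epsilon_{i+1})b_{i+1}^{-k}|x_{i+1}|$. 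Summing over the $\alpha$ copies yields the claimed lower bound directly, since straddling occurrences only add to the count. For the upper bound, I would bound the number of boundary-straddling occurrences: each of the $\alpha$ internal boundaries can be the location of at most $k-1 < k$ occurrences of $B$ that begin in the preceding copy and cross into the next, contributing at most $k\alpha$; and the occurrences that begin within the last, truncated stretch of $\beta$ digits number at most $\beta$. Adding $(1+\epsilon_{i+1})b_{i+1}^{-k}\alpha|x_{i+1}|$, $k\alpha$, and $\beta$ gives the stated upper bound.

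The main obstacle, such as it is, is purely bookkeeping: correctly accounting for the boundary terms and making sure the crude over-count $k\alpha + \beta$ genuinely dominates every occurrence of $B$ in $l_{i+1}x_{i+1}$ with starting position $\le m$ that is not already counted by the $\alpha$ internal copies. One has to be slightly careful that an occurrence of $B$ beginning in the final full copy of $x_{i+1}$ but extending beyond position $m$ should not be double-subtracted — but since $N_{|x_{i+1}|}(B,x_{i+1})$ counts all occurrences with \emph{starting} position in that copy regardless of where they end, the within-copy count is already an over-estimate of genuinely-interior occurrences, and the inequality goes the right way. I would also note for the lower bound that we need $\alpha \ge 1$ for it to be non-vacuous, but when $\alpha = 0$ both sides are $\le 0 \le N_m$ trivially. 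No deep idea is needed beyond the observation that $(p_i,b_i)$-uniformity converts the abstract weighting into the clean factor $b_i^{-k}$, combined with the monotonicity of $\{k_i\}$ and $\{p_i\}$ so that the normality of $x_{i+1}$ is usable at the same parameters $k$ and $b$.
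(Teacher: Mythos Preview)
Your proposal is correct and follows essentially the same argument as the paper: both parts use that $x_j$ is $(\epsilon_j,k_j,\mu_j)$-normal with $\mu_j$ $(p_j,b_j)$-uniform to get $\mu_j^{(k)}(B)=b_j^{-k}$, and for the concatenated block the bound is obtained by adding the within-copy counts over the $\alpha$ copies and crudely over-counting boundary and tail occurrences by $k\alpha+\beta$. Your write-up is in fact slightly more explicit than the paper's in justifying why the normality of $x_{i+1}$ applies at the given $k$ and $b$ (via monotonicity of $\{k_i\}$ and $\{p_i\}$) and in handling the $\alpha=0$ edge case.
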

\begin{proof}
Since $x_i$ is $(\epsilon_i,k_i,\mu_i)$-normal and $\mu_i$ is $(p_i,b_i)$-uniform, it immediately follows that
\begin{equation*}
(1-\epsilon_i)b_i^{-k}|x_i| \leq N_{|x_i|}(B,x_i) \leq (1+\epsilon_i)b_i^{-k}|x_i|.
\end{equation*}

We can estimate $N_m(B,l_{i+1}x_{i+1})$ by using the fact that $k \leq k_{i+1}$ and $x_{i+1}$ is $(\epsilon_{i+1},k_{i+1},\mu_{i+1})$-normal so that
\begin{equation*}
(1-\epsilon_{i+1})b_{i+1}^{-k}|x_{i+1}| \leq N_{|x_{i+1}|}(B,x_{i+1}) \leq (1+\epsilon_{i+1})b_{i+1}^{-k}|x_{i+1}|.
\end{equation*}
The upper bound for $N_m(B,l_{i+1}x_{i+1})$ is determined by assuming that $B$ occurs at every location in the initial substring of length $\beta$ of a copy of $x_{i+1}$ and $k$ times on each of the $\alpha$ boundaries.  The lower bound is attained by assuming $B$ never occurs in these positions, so
\begin{equation*}
(1-\epsilon_{i+1})b_{i+1}^{-k} \alpha |x_{i+1}| \leq N_m(B,l_{i+1}x_{i+1})  \leq (1+\epsilon_{i+1})b_{i+1}^{-k} \alpha |x_{i+1}|+\beta+k\alpha. 
\end{equation*}
\end{proof}

We define the following quantity, which simplifies the statement of \refl{l2.2} and proof of \refl{l2.4}:
$$
\kappa=\left( L_{i-1}+k(l_i+1)+(1+\epsilon_i)b_i^{-k} l_i |x_i| \right)+((1+\epsilon_{i+1})b_{i+1}^{-k} |x_{i+1}|+k)\alpha+\beta.
$$
\begin{lem}\labl{l2.2}
If $k \leq k_i$ and $B$ is a block of length $k$ in base $b \leq p_i$, then
\begin{equation}
(1-\epsilon_i)b_i^{-k} l_i |x_i|+(1-\epsilon_{i+1})b_{i+1}^{-k} \alpha |x_{i+1}| \leq N_n^Q(B,x) \leq \kappa.
\end{equation}
\end{lem}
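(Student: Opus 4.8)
The plan is to decompose the first $n$ digits of $x$ into the three blocks already identified in the text, namely the initial block $I := l_1 x_1 l_2 x_2 \ldots l_{i-1} x_{i-1}$, the middle block $l_i x_i$, and the tail block $\alpha x_{i+1}\, 1y$, and to count occurrences of $B$ in each piece separately, being careful about occurrences that straddle the boundaries between consecutive pieces and between consecutive copies of a generator $x_j$ inside a piece. The point is that every occurrence of $B$ with starting position $\le n$ either lies entirely within one of these three blocks, or straddles one of the boundaries; since $B$ has length $k$, each boundary contributes at most $k$ extra occurrences beyond those counted inside the blocks.

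For the lower bound I would simply discard everything except the full copies of $x_i$ in the middle block and the full copies of $x_{i+1}$ in the tail block: \refl{l2.1} gives $N_{l_i|x_i|}(B, l_i x_i) \ge (1-\epsilon_i) b_i^{-k} l_i |x_i|$ (applying the first inequality of \refl{l2.1} to each of the $l_i$ copies of $x_i$, with no boundary corrections needed for a lower bound) and likewise $N_m(B, l_{i+1}x_{i+1}) \ge (1-\epsilon_{i+1}) b_{i+1}^{-k} \alpha |x_{i+1}|$. Since occurrences in disjoint parts of the expansion are disjoint, adding these two contributions gives the stated lower bound $(1-\epsilon_i) b_i^{-k} l_i |x_i| + (1-\epsilon_{i+1}) b_{i+1}^{-k} \alpha |x_{i+1}| \le N_n^Q(B,x)$, with the contributions from $I$, from $1y$, and from all the boundaries only helping.

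For the upper bound I would bound each of the three pieces from above. The initial block $I$ has length $L_{i-1}$, so it contributes at most $L_{i-1}$ occurrences (trivially). The middle block $l_i x_i$ contributes at most $(1+\epsilon_i) b_i^{-k} l_i |x_i|$ from inside the $l_i$ copies of $x_i$ (again by \refl{l2.1}, summed over copies), plus at most $k$ for each of the $l_i - 1 \le l_i$ internal boundaries between consecutive copies. The tail block $\alpha x_{i+1}\, 1y$ is handled directly by the second inequality of \refl{l2.1}, which already accounts for the $\alpha$ internal boundaries and the $\beta$-digit piece $1y$, giving at most $(1+\epsilon_{i+1}) b_{i+1}^{-k}\alpha|x_{i+1}| + \beta + k\alpha$. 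Finally there are two boundaries \emph{between} the three pieces, contributing at most $2k$ more; folding $2k$ together with the $k l_i$ term as $k(l_i + 1)$ (generously) and collecting everything yields exactly
\[
N_n^Q(B,x) \le \bigl(L_{i-1} + k(l_i+1) + (1+\epsilon_i) b_i^{-k} l_i |x_i|\bigr) + \bigl((1+\epsilon_{i+1}) b_{i+1}^{-k}|x_{i+1}| + k\bigr)\alpha + \beta = \kappa.
\]

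I do not expect a serious obstacle here: this lemma is essentially bookkeeping built directly on \refl{l2.1}, and the only place demanding care is making sure no occurrence of $B$ is double-counted or missed — in particular that every straddling occurrence is attributed to exactly one boundary and that the $\beta$ and $k\alpha$ terms from \refl{l2.1} are not re-added. The genuinely delicate estimates (showing that $|N_n^Q(B,x)/Q_n^{(k)} - 1|$ actually tends to $0$) are deferred to the later lemmas, where the $W$-good hypotheses \refeq{good1}--\refeq{good3} will be used to show that $L_{i-1}$, $k(l_i+1)$, $\beta$, and the $\epsilon$-error terms are all negligible compared with the main term; here we only need the crude two-sided bound, so the proof is short.
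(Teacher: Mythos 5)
Your proof is correct and follows essentially the same route as the paper: split the first $n$ digits into the initial, middle, and tail blocks, apply \refl{l2.1} to the copies of $x_i$ and to $l_{i+1}x_{i+1}$, discard the rest for the lower bound, and for the upper bound trivially bound the initial block by its length $L_{i-1}$ and allow $k$ occurrences per boundary, which is exactly how the paper accounts for the $k(l_i+1+\alpha)$ term. Your bookkeeping of the boundaries matches the paper's, so nothing further is needed.
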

\begin{proof} 
For the lower bound, we consider the case where $B$ never occurs in any of the blocks $x_j$ or on the borders for $j < i$.  By combining this with our estimates for $N_{|x_i|}(B,x_i)$ and $N_m(B,l_{i+1}x_{i+1})$ in \refl{l2.1}, we get
\begin{equation*}
N_n^Q(B,x) \geq (1-\epsilon_i)b_i^{-k} l_i |x_i|+(1-\epsilon_{i+1})b_{i+1}^{-k} \alpha |x_{i+1}|.
\end{equation*}

Next, we can get an upper bound for $N_n^Q(B,x)$.  Here we assume that $B$ occurs at every position in each of the $x_j$ for $j<i$ and $k$ times on each of the boundaries.
\begin{equation*}
N_n^Q(B,x) \leq (l_1 |x_1|+\ldots+l_{i-1}|x_{i-1}|)+(1+\epsilon_i)b_i^{-k} l_i |x_i|
\end{equation*}
\begin{equation*}
+(1+\epsilon_{i+1})b_{i+1}^{-k} \alpha |x_{i+1}|+\beta+k(l_i+1+\alpha)
\end{equation*}
\begin{equation*}
=\left( L_{i-1}+k(l_i+1)+(1+\epsilon_i)b_i^{-k} l_i |x_i| \right)+((1+\epsilon_{i+1})b_{i+1}^{-k} |x_{i+1}|+k)\alpha+\beta.
\end{equation*}
\end{proof}

Due to the algebraic complexity of $Q_n^{(k)}$, it will be difficult to directly estimate $(\ref{eqn:bnd})$. Thus, we will introduce a quantity close in value to $Q_n^{(k)}$ that will make this easier.  Let
\begin{equation*}
S_n^{(k)}=\sum_{j=1}^{i} b_j^{-k}l_j |x_j|+b_{i+1}^{-k} m=b_1^{-k}l_1 |x_1|+b_2^{-k}l_2|x_2|+\ldots+b_i^{-k} l_i|x_i|+b_{i+1}^{-k} m.
\end{equation*}

\begin{lem}\labl{l2.3}
$\lim_{n \rightarrow \infty} \frac {Q_n^{(k)}} {S_n^{(k)}}=1.$
\end{lem}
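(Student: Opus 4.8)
The plan is to show that $Q_n^{(k)}$ and $S_n^{(k)}$ differ only by a quantity that is negligible compared to $S_n^{(k)}$, and that $S_n^{(k)}\to\infty$. First I would observe that both sums range over the same $n$ indices $j=1,\dots,n$, and that for each $j$ the summand $\tfrac{1}{q_j q_{j+1}\cdots q_{j+k-1}}$ of $Q_n^{(k)}$ equals $b_{i'}^{-k}$, where $i'=i(j)$, \emph{except} when the window $[j,j+k-1]$ straddles one of the block boundaries $L_1,L_2,\dots$. Inside the $\ell_{i'}$-fold copy of $x_{i'}$ (i.e.\ for $L_{i'-1}<j$ and $j+k-1\le L_{i'}$) the product is exactly $b_{i'}^{k}$, so those terms contribute exactly $\sum_{i'} b_{i'}^{-k}(\ell_{i'}|x_{i'}| - (\text{at most }k-1))$ up to the top block, which matches $S_n^{(k)}$ up to boundary corrections and the truncation at $n$. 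Thus $Q_n^{(k)} = S_n^{(k)} + \Delta_n$, where $\Delta_n$ collects (i) the at most $k-1$ ``straddling'' windows at each of the $\le i$ boundaries passed so far, each of which is a product of $k$ terms lying between $b_1^{-k}$ and $0$, hence bounded in absolute value by $b_1^{-k}$; and (ii) the analogous boundary terms inside the current partially-completed block $\ell_{i+1}x_{i+1}$. Altogether $|\Delta_n| \le C\, i\, k\, b_1^{-k}$ for an absolute constant $C$ (in fact one can take $C=2$).

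Next I would bound $S_n^{(k)}$ from below to absorb $\Delta_n$. Since for $L_{i-1}<n\le L_i$ the sum $S_n^{(k)}$ includes the full term $b_i^{-k}\ell_i|x_i|$ (this is the ``middle block''), and $b_i = i$ grows while $\ell_i|x_i|$ grows at least linearly in $L_i \ge i$, we get $S_n^{(k)} \ge b_i^{-k}\ell_i|x_i|$. The $W$-good hypothesis, specifically $(\ref{eqn:good2})$ applied with index $i$ in place of $i-1$, forces $\ell_i|x_i|$ to dominate $i\, b_i^{k}\,\ell_{i+1}|x_{i+1}|$ and in particular $\ell_i |x_i| = \omega(i\, b_i^k)$; equivalently $b_i^{-k}\ell_i|x_i| = \omega(i)$. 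Hence $S_n^{(k)} = \omega(i(n))$, and since $i(n)\to\infty$ we have $S_n^{(k)}\to\infty$ and moreover $|\Delta_n|/S_n^{(k)} \le C k b_1^{-k} \cdot i(n)/S_n^{(k)} \to 0$. Therefore
\begin{equation*}
\frac{Q_n^{(k)}}{S_n^{(k)}} = 1 + \frac{\Delta_n}{S_n^{(k)}} \longrightarrow 1.
\end{equation*}

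The main obstacle I anticipate is getting the boundary error term $\Delta_n$ and the lower bound on $S_n^{(k)}$ calibrated against each other cleanly: one must be careful that the number of boundaries crossed by position $n$ is exactly $i=i(n)$ (so the error is genuinely $O(i)$, not something larger), and that the single surviving ``middle'' term $b_i^{-k}\ell_i|x_i|$ in $S_n^{(k)}$ really does beat $i$ for all large $n$ uniformly — this is where the precise form of $(\ref{eqn:good2})$ (or $(\ref{eqn:good1})$) must be invoked rather than a vague growth statement. Everything else is bookkeeping: splitting the finite sums at the boundaries $L_1,\dots,L_{i-1}$ and $L_i$, noting that within each constant-$q$ stretch the $k$-fold products are literally constant away from the last $k-1$ positions, and bounding the straddling products trivially by $b_1^{-k}\le 2^{-k}$.
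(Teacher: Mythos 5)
Your argument follows the paper's proof essentially step for step: both compare $Q_n^{(k)}$ and $S_n^{(k)}$ summand by summand, observe that they differ only at the at most $k-1$ windows straddling each of the $O(i)$ block boundaries, each such discrepancy being bounded by a constant, so that $|S_n^{(k)}-Q_n^{(k)}|=O(ik)$; and both then divide by the lower bound $S_n^{(k)}\geq b_i^{-k}l_i|x_i|$ and appeal to $(\ref{eqn:good2})$ to see that this denominator grows faster than $i$. (The paper additionally records the one-sided inequality $S_n^{(k)}\geq Q_n^{(k)}$, which it needs later in Lemma~\ref{lemma:l2.4} but which your two-sided bound on $\Delta_n$ makes unnecessary for this lemma.)

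The one step that is wrong as written is your invocation of $(\ref{eqn:good2})$. Shifting the index forward gives $\frac{l_i}{l_{i+1}}\cdot\frac{|x_i|}{|x_{i+1}|}=o\bigl((i+1)^{-1}b_{i+1}^{-k}\bigr)$, i.e.\ it is $l_{i+1}|x_{i+1}|$ that dominates $i\,b_{i+1}^{k}\,l_i|x_i|$ --- the exact opposite of your claim that ``$l_i|x_i|$ dominates $i\,b_i^{k}\,l_{i+1}|x_{i+1}|$'' (in the Section~4 example $l_{i+1}|x_{i+1}|$ is astronomically larger than $l_i|x_i|$, so the claim as stated is false). Fortunately the consequence you actually use, $l_i|x_i|=\omega(i\,b_i^{k})$, is true and follows from $(\ref{eqn:good2})$ read at index $i$ exactly as stated: it says $i\,b_i^{k}\,l_{i-1}|x_{i-1}|=o(l_i|x_i|)$, and since $l_{i-1}|x_{i-1}|\geq 1$ for all large $i$ (the ratios in $(\ref{eqn:good2})$ force $l_i>0$ eventually, and blocks are nonempty), this yields $i\,b_i^{k}=o(l_i|x_i|)$ and hence $S_n^{(k)}\geq b_i^{-k}l_i|x_i|=\omega(i)$, which is all you need to absorb $\Delta_n$. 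With that substitution the proof is correct; the parenthetical ``$b_i=i$'' is a harmless slip, as that equality holds only in the Section~4 example and is not used in your estimate.
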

\begin{proof}
Let $s=\min \{t : k<|x_t|\}$.  For $j \geq s$, define
$$
\bar Q_j^{(k)}=\left( \left( \frac {1} {b_j^k}+\ldots+\frac {1} {b_j^k} \right) + \left( \frac {1} {b_j^{k-1} b_{j+1}}+\ldots + \frac {1} {b_j b_{j+1}^{k-1}}\right) \right)
$$
$$
=\frac {l_j |x_j|-(k-1)} {b_j^k} + \sum_{t=1}^{k-1} \frac {1} {b_J^{k-1-t} b_{j+1}^t}.
$$
Thus, by $(\ref{eqn:Qnk})$ and our choice of $Q$, we get that
\begin{equation}
Q_n^{(k)}=Q_{L_{s-1}}^{(k)}+\sum_{j=s}^i \bar Q_j^{(k)}+\sum_{t=L_i+1}^{n} \frac {1} {q_t q_{t+1} \ldots q_{t+k-1}}
\end{equation} 
where the last summation will contain up to $l_{i+1}|x_{i+1}|-(k-1)$ terms identical to $\frac {1} {b_{j+1}^k}$ and up to $k-1$ terms of the form $\frac {1} {b_{i+1}^{k-1-t} b_{i+2}^t}$, depending on $m$.

Similarly to $\bar Q_j^{(k)}$, for $j \geq s$, define
$$
\bar S_j^{(k)}=\left( \frac {1} {b_j^k}+\ldots+\frac {1} {b_j^k} \right)=\frac {l_j |x_j|} {b_j^k}.
$$
Thus,
\begin{equation}
S_n^{(k)}=S_{L_{s-1}}^{(k)}+\sum_{j=s}^i \bar S_j^{(k)}+\sum_{t=L_i+1}^{n} \frac {1} {q_t q_{t+1} \ldots q_{t+k-1}}.
\end{equation} 
We note that almost all terms in $Q_n^{(k)}$ and $S_n^{(k)}$ are identical and are equal to $\frac {1} {b_j^k}$ for some $j$ and will thus cancel out when we consider $S_n^{(k)}-Q_n^{(k)}$.  The only corresponding terms that remain in the difference are thus of the form $\frac {1} {b_j^k}-\frac {1} {b_J^{k-1-t} b_{j+1}^t}$.  However, each of these terms is non-negative as $\{ b_i \}$ is a non-decreasing sequence.  Therefore, $S_n^{(k)}-Q_n^{(k)}$ is non-decreasing in $n$ and 
\begin{equation}\label{eqn:235}
S_n^{(k)} \geq Q_n^{(k)}
\end{equation}
 for all $n$.  In particular, we arrive at the following bound:
\begin{equation}\label{eqn:231} 
S_n^{(k)}-Q_n^{(k)} \leq S_{L_{i+1}}^{(k)}-Q_{L_{i+1}}^{(k)} = \left( S_{L_{s-1}}^{(k)}-Q_{L_{s-1}}^{(k)} \right) + \sum_{j=s}^{i+1} \left( \bar S_j^{(k)}-\bar Q_j^{(k)} \right).
\end{equation}
But,
\begin{equation*}
\bar S_j^{(k)}-\bar Q_j^{(k)}=(l_j |x_j|-(k-1)) \left(\frac {1} {b_j^k}-\frac {1} {b_j^k} \right)+ \sum_{t=1}^{k-1} \left( \frac {1} {b_j^k} - \frac {1} {b_J^{k-1-t} b_{j+1}^t} \right)
\end{equation*}
\begin{equation}\label{eqn:232} 
<(l_j |x_j|-(k-1)) \cdot 0+ \sum_{t=1}^k (1 - 0) = k.
\end{equation}
If we let $r=\left( S_{L_{s-1}}^{(k)}-Q_{L_{s-1}}^{(k)} \right)$ and combine $(\ref{eqn:231})$ and $(\ref{eqn:232})$, then we find that
\begin{equation}\label{eqn:233}
S_n^{(k)}-Q_n^{(k)} < r + \sum_{j=s}^{i+1} k = r + k (i+2-s).
\end{equation}
Lastly, we note that
\begin{equation}\label{eqn:234}
S_n^{(k)}=\sum_{j=1}^{i} b_j^{-k}l_j |x_j|+b_{i+1}^{-k} m \geq l_i |x_i|.
\end{equation}
Using $(\ref{eqn:233})$ and $(\ref{eqn:234})$, we may now show that $\lim_{n \rightarrow \infty} \frac {Q_n^{(k)}} {S_n^{(k)}}=1$:
\begin{equation}
\left| \frac {Q_n^{(k)}} {S_n^{(k)}}-1 \right| = \frac {S_n^{(k)}-Q_n^{(k)}} {S_n^{(k)}} < \frac {(r+k-ks)+ki} {l_i |x_i|}=\frac {r+k-ks} {l_i |x_i|}+k\frac {i} {l_i |x_i|}.
\end{equation}
However, $(r+k-ks)$ is constant with respect to $n$ and $|x_i| \rightarrow \infty$ so $\frac {r+k-ks} {l_i |x_i|} \rightarrow 0$. By $(\ref{eqn:good2})$,  $k\frac {i} {l_i |x_i|} \rightarrow 0$ .
\end{proof}

We will also use the following rational functions, defined on $\mathbb{R}^{\geq 0} \times \mathbb{R}^{\geq 0}$, to estimate $(\ref{eqn:bnd})$:
\begin{equation*}
f_i (w,z)=\frac {\left( S_{L_{i-1}}^{(k)}+\epsilon_i b_i^{-k} l_i |x_i| \right) + \left( \epsilon_{i+1}b_{i+1}^{-k}|x_{i+1}| \right) w +b_{i+1}^{-k} z} {S_{L_i}^{(k)}+(b_{i+1}^{-k} |x_{i+1}|)w+b_{i+1}^{-k}z};
\end{equation*}
\begin{equation*}
g_i(w,z)=\frac{\left( L_{i-1}+\epsilon_i b_i^{-k} l_i |x_i|+k(l_i+1) \right)+(\epsilon_{i+1}b_{i+1}^{-k} |x_{i+1}|+k) w+z} {S_{L_i}^{(k)}+(b_{i+1}^{-k} |x_{i+1}|)w+b_{i+1}^{-k}z}.
\end{equation*}

\begin{lem}\labl{l2.4}
Let $k \in R(W)$ and let $B$ be a block of length $k$ in base $b$.  If $n$ is large enough so that $S_n^{(k)}/Q_n^{(k)}<2$, $k \leq k_i$ and $b \leq p_i$, then
\begin{equation}
\left| \frac {N_n^Q(B,x)} {Q_n^{(k)}} -1 \right| < 2g_i (\alpha,\beta)+\frac {S_n^{(k)}-Q_n^{(k)}} {S_n^{(k)}}.
\end{equation}
\end{lem}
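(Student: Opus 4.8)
\emph{Strategy of the proof.}\ The plan is to compare $N_n^Q(B,x)$ with $Q_n^{(k)}$ by passing through the auxiliary quantity $S_n^{(k)}$, which is tailor-made for this purpose: writing out its definition and using $m=\alpha|x_{i+1}|+\beta$ we have
\begin{align*}
S_n^{(k)}&=S_{L_{i-1}}^{(k)}+b_i^{-k}l_i|x_i|+b_{i+1}^{-k}\left(\alpha|x_{i+1}|+\beta\right)\\
&=S_{L_i}^{(k)}+\left(b_{i+1}^{-k}|x_{i+1}|\right)\alpha+b_{i+1}^{-k}\beta,
\end{align*}
which is exactly the common denominator of $f_i(\alpha,\beta)$ and $g_i(\alpha,\beta)$. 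I would start from the identity
\[
\frac{N_n^Q(B,x)}{Q_n^{(k)}}-1=\frac{N_n^Q(B,x)-S_n^{(k)}}{S_n^{(k)}}\cdot\frac{S_n^{(k)}}{Q_n^{(k)}}+\left(\frac{S_n^{(k)}}{Q_n^{(k)}}-1\right)
\]
and estimate the two summands, the first with the help of \refl{l2.2} and the second with the help of \refl{l2.3}.

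For the first summand, feed in the two bounds of \refl{l2.2}. Subtracting $S_n^{(k)}$ from the upper bound $\kappa$ and discarding the nonnegative leftover $S_{L_{i-1}}^{(k)}+b_{i+1}^{-k}\beta$ shows that $\kappa-S_n^{(k)}$ is at most the numerator of $g_i(\alpha,\beta)$, so $N_n^Q(B,x)-S_n^{(k)}\le g_i(\alpha,\beta)\,S_n^{(k)}$. Subtracting the lower bound of \refl{l2.2} from $S_n^{(k)}$ reproduces, term for term, exactly the numerator of $f_i(\alpha,\beta)$, so $S_n^{(k)}-N_n^Q(B,x)\le f_i(\alpha,\beta)\,S_n^{(k)}$. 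The inequality $S_{L_{i-1}}^{(k)}\le L_{i-1}$ (each $b_j^{-k}l_j|x_j|\le l_j|x_j|$), together with the nonnegativity of the extra numerator terms $k(l_i+1)$ and $kw$ in $g_i$, also gives $f_i(w,z)\le g_i(w,z)$ on $\mathbb{R}^{\geq 0}\times\mathbb{R}^{\geq 0}$, so the two one-sided estimates combine into
\[
\left|\frac{N_n^Q(B,x)-S_n^{(k)}}{S_n^{(k)}}\right|\le g_i(\alpha,\beta).
\]

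It remains to assemble the pieces. The hypothesis $S_n^{(k)}/Q_n^{(k)}<2$ makes the first summand of the identity strictly less than $2g_i(\alpha,\beta)$, and since $S_n^{(k)}\ge Q_n^{(k)}$ by \refl{l2.3} a short manipulation puts the remaining term $S_n^{(k)}/Q_n^{(k)}-1$ in the form $(S_n^{(k)}-Q_n^{(k)})/S_n^{(k)}$; adding the two gives
\[
\left|\frac{N_n^Q(B,x)}{Q_n^{(k)}}-1\right|<2g_i(\alpha,\beta)+\frac{S_n^{(k)}-Q_n^{(k)}}{S_n^{(k)}}.
\]
The step I expect to be the main obstacle is the matching in the second paragraph: one has to expand $S_n^{(k)}$ through $S_{L_{i-1}}^{(k)}$, substitute $m=\alpha|x_{i+1}|+\beta$, and then check --- separately for the constant term, the coefficient of $\alpha$, and the coefficient of $\beta$ --- that $\kappa-S_n^{(k)}$ does not exceed the numerator of $g_i(\alpha,\beta)$ while $S_n^{(k)}$ minus the \refl{l2.2} lower bound is precisely the numerator of $f_i(\alpha,\beta)$. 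The functions $f_i$ and $g_i$ were evidently constructed so that these comparisons are essentially identities, so once the bookkeeping is arranged correctly the verification is only careful arithmetic; everything after that uses nothing beyond $S_n^{(k)}\ge Q_n^{(k)}$ and $S_n^{(k)}/Q_n^{(k)}<2$.
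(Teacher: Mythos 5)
Your proof is correct in substance and takes essentially the same route as the paper's: both compare $N_n^Q(B,x)$ with $S_n^{(k)}$ using the two bounds of \refl{l2.2}, recognize $S_n^{(k)}=S_{L_i}^{(k)}+(b_{i+1}^{-k}|x_{i+1}|)\alpha+b_{i+1}^{-k}\beta$ as the common denominator of $f_i$ and $g_i$, check that $S_n^{(k)}$ minus the lower bound is exactly the numerator of $f_i(\alpha,\beta)$ while $\kappa-S_n^{(k)}$ is at most the numerator of $g_i(\alpha,\beta)$, note $f_i\le g_i$, and close with $S_n^{(k)}\ge Q_n^{(k)}$ and $S_n^{(k)}/Q_n^{(k)}<2$. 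One caveat: your final ``short manipulation'' replaces $S_n^{(k)}/Q_n^{(k)}-1=(S_n^{(k)}-Q_n^{(k)})/Q_n^{(k)}$ by the smaller quantity $(S_n^{(k)}-Q_n^{(k)})/S_n^{(k)}$, which is an inequality in the wrong direction; however, the paper's own proof makes the identical silent substitution, and since \refl{l2.3} sends both quotients to $0$, the discrepancy is harmless for the proof of the Main Theorem.
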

\begin{proof} 
Using our lower bound from \refl{l2.2} on $N_n^Q(B,x)$, $\frac {N_n^Q(B,x)} {Q_n^{(k)}} - 1<0$.  So we use $(\ref{eqn:235})$ and arrive at the upper bound
\begin{equation*}
\left| \frac {N_n^Q(B,x)} {Q_n^{(k)}} -1 \right| \leq 1-\frac {(1-\epsilon_i)b_i^{-k} l_i |x_i|+(1-\epsilon_{i+1})b_{i+1}^{-k} \alpha |x_{i+1}|} {Q_n^{(k)}}
\end{equation*}
\begin{equation*}
<\frac {S_n^{(k)}-((1-\epsilon_i)b_i^{-k} l_i |x_i|+(1-\epsilon_{i+1})b_{i+1}^{-k} \alpha |x_{i+1}|)} {Q_n^{(k)}} \cdot \frac {Q_n^{(k)}} {S_n^{(k)}} \cdot \frac {S_n^{(k)}} {Q_n^{(k)}} 
\end{equation*}
\begin{equation}\label{eqn:241}
<2\frac {S_n^{(k)}-((1-\epsilon_i)b_i^{-k} l_i |x_i|+(1-\epsilon_{i+1})b_{i+1}^{-k} \alpha |x_{i+1}|)} {S_n^{(k)}}=2f_i(\alpha,\beta).
\end{equation}
Similarly to $(\ref{eqn:241})$ and using our upper bound from \refl{l2.2} for $N_n^Q(B,x)$, we can conclude
\begin{equation*}
\left| \frac {N_n^Q(B,x)} {Q_n^{(k)}} - 1 \right| \leq -1+\frac {\kappa} {Q_n^{(k)}} = \frac {\kappa-Q_n^{(k)}} {Q_n^{(k)}}=\frac  {\kappa-S_n^{(k)}} {Q_n^{(k)}} + \frac {S_n^{(k)}-Q_n^{(k)}} {Q_n^{(k)}} 
\end{equation*}
\begin{equation*}
=\frac {\kappa-S_n^{(k)}} {Q_n^{(k)}} \cdot \frac {Q_n^{(k)}} {S_n^{(k)}} \cdot \frac {S_n^{(k)}} {Q_n^{(k)}} + \frac {S_n^{(k)}-Q_n^{(k)}} {Q_n^{(k)}} < 2 \frac {\kappa-S_n^{(k)}} {S_n^{(k)}} + \frac {S_n^{(k)}-Q_n^{(k)}} {Q_n^{(k)}}.
\end{equation*}
But,
\begin{equation*}
\frac {\kappa-S_n^{(k)}} {S_n^{(k)}}=\frac {1} {S_n^{(k)}} \Bigg( \left(\sum_{j=1}^{i-1} (1-j^{-k})l_j |x_j|+k(l_i+1)+\epsilon_i b_i^{-k} l_i |x_i| \right)
\end{equation*}
\begin{equation*}
+(\epsilon_{i+1}b_{i+1}^{-k} |x_{i+1}|+k) \alpha+(1-b_{i+1}^{-k})\beta \Bigg)
\end{equation*}
\begin{equation*}
<\frac{\left( L_{i-1}+\epsilon_i b_i^{-k} l_i |x_i|+k(l_i+1) \right)+(\epsilon_{i+1}b_{i+1}^{-k} |x_{i+1}|+k) \alpha+\beta} {S_{L_i}+(b_{i+1}^{-k} |x_{i+1}|)\alpha+b_{i+1}^{-k}\beta}=g_i(\alpha,\beta).
\end{equation*}
So,
\begin{equation*}
\left| \frac {N_n^Q(B,x)} {Q_n^{(k)}} - 1 \right| <\max \left( 2f_i (\alpha,\beta),2g_i (\alpha,\beta)+\frac {S_n^{(k)}-Q_n^{(k)}} {S_n^{(k)}} \right).
\end{equation*}
However, since the numerator of $g_i (\alpha,\beta)$ is clearly greater than the numerator of $f_i (\alpha,\beta)$ and their denominators are the same we conclude that
\begin{equation*}
f_i (\alpha,\beta)<g_i (\alpha,\beta).
\end{equation*}
Therefore,
\begin{equation*}
\left| \frac {N_n^Q(B,x)} {Q_n^{(k)}} - 1 \right| < 2g_i (\alpha,\beta)+\frac {S_n^{(k)}-Q_n^{(k)}} {S_n^{(k)}}.
\end{equation*}
\end{proof}

In light of \refl{l2.4}, we will want to find a good bound for $g_i(w,z)$ where $(w,z)$ ranges over values in $\{0,1,\ldots,l_{i+1} \} \times \{0,1,\ldots,|x_{i+1}|-1 \}$.

\begin{lem}\labl{l2.5}
If $k \in R(W)$, $|x_i|>4k$, $|x_{i+1}|>\frac {k b_{i+1}^k} {\epsilon_i-\epsilon_{i+1}}$, $l_i>0$ and 
\begin{equation}
(w,z) \in \{0,1,\ldots,l_{i+1} \} \times \{0,1,\ldots,|x_{i+1}|-1 \},
\end{equation}
then
\begin{equation}
g_i(w,z) < g_i (0,|x_{i+1}|)=\frac {(L_{i-1}+\epsilon_i b_i^{-k}l_i|x_i|+k(l_i+1)) +|x_{i+1}|} {S_{L_i}+b_{i+1}^{-k}|x_{i+1}|}.
\end{equation}
\end{lem}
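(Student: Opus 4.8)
The plan is to treat $g_i$ as a function of two nonnegative real variables and exploit that it is a quotient of affine functions. Writing
\[
g_i(w,z)=\frac{A+Pw+z}{D+Rw+Sz},
\]
with $A=L_{i-1}+\epsilon_i b_i^{-k}l_i|x_i|+k(l_i+1)$, $P=\epsilon_{i+1}b_{i+1}^{-k}|x_{i+1}|+k$, $D=S_{L_i}^{(k)}$, $R=b_{i+1}^{-k}|x_{i+1}|$ and $S=b_{i+1}^{-k}$, all of $A,P,D,R,S$ are positive and the denominator is positive on all of $[0,\infty)^2$. The restriction of such a function to any horizontal or vertical line is a linear fractional function with no pole on the relevant interval, hence strictly monotone there; so it suffices to pin down the two monotonicity directions. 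I will show that (i) $z\mapsto g_i(w,z)$ is strictly increasing for every fixed $w\ge 0$, and (ii) $w\mapsto g_i(w,|x_{i+1}|)$ is strictly decreasing. Granting these, for any admissible $(w,z)$ one chains
\[
g_i(w,z)\ \le\ g_i(w,|x_{i+1}|-1)\ <\ g_i(w,|x_{i+1}|)\ \le\ g_i(0,|x_{i+1}|),
\]
which is the assertion. This routes the bound through the boundary value $z=|x_{i+1}|$, which lies just outside the admissible box, and never uses any information about $l_{i+1}$ — in keeping with the hypotheses of the lemma.

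For (i), a direct computation shows that the numerator of $\partial_z g_i$ equals $(D-SA)+(R-SP)w$, which is independent of $z$, so it is enough to check $D-SA>0$ and $R-SP>0$. Since $\{b_i\}$ is non-decreasing, $D=S_{L_{i-1}}^{(k)}+b_i^{-k}l_i|x_i|\ge b_{i+1}^{-k}L_{i-1}+b_{i+1}^{-k}l_i|x_i|=b_{i+1}^{-k}L_i$, whence $D-SA\ge b_{i+1}^{-k}\bigl(l_i|x_i|(1-\epsilon_i b_i^{-k})-k(l_i+1)\bigr)$; for $k\ge 1$ we have $1-\epsilon_i b_i^{-k}>\tfrac12$ (as $\epsilon_i<1$, $b_i\ge 2$), and with $l_i\ge 1$, $|x_i|>4k$ this is positive, while for $k=0$ it reduces to $(1-\epsilon_i)l_i|x_i|>0$. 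For the second, $R-SP=b_{i+1}^{-k}\bigl(|x_{i+1}|(1-\epsilon_{i+1}b_{i+1}^{-k})-k\bigr)>0$, since $|x_{i+1}|\ge|x_i|>4k$ (the sequence $\{|x_i|\}$ is non-decreasing) and $1-\epsilon_{i+1}b_{i+1}^{-k}>\tfrac12$. Hence $\partial_z g_i>0$ throughout $[0,\infty)^2$.

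For (ii), the numerator of $\partial_w g_i$ is $(PD-RA)+(PS-R)z$, which is independent of $w$. Set $M=|x_{i+1}|$ and $c=M(1-\epsilon_{i+1}b_{i+1}^{-k})-k>0$ (positive by the computation in (i)); evaluating at $z=M$ and simplifying gives $b_{i+1}^{-k}M(\epsilon_{i+1}D-A-c)+kD$, so it suffices to prove $A-\epsilon_{i+1}D>kDb_{i+1}^{k}/M$ (the term $-c$ only makes the quantity more negative). This is where the hypothesis $|x_{i+1}|>kb_{i+1}^{k}/(\epsilon_i-\epsilon_{i+1})$ enters: it gives $kb_{i+1}^{k}/M<\epsilon_i-\epsilon_{i+1}$, hence $kDb_{i+1}^{k}/M<(\epsilon_i-\epsilon_{i+1})D$. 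On the other hand, using $D=S_{L_{i-1}}^{(k)}+b_i^{-k}l_i|x_i|\le L_{i-1}+b_i^{-k}l_i|x_i|$ and expanding $A$, the $b_i^{-k}l_i|x_i|$ contributions cancel and one is left with
\[
A-\epsilon_{i+1}D-(\epsilon_i-\epsilon_{i+1})D\ \ge\ (1-\epsilon_i)L_{i-1}+k(l_i+1)\ \ge\ 0 ,
\]
so $A-\epsilon_{i+1}D\ge(\epsilon_i-\epsilon_{i+1})D>kDb_{i+1}^{k}/M$. Therefore $\partial_w g_i(w,M)<0$ for every $w\ge 0$, which is (ii).

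The step I expect to be the main obstacle is (ii). A direct attempt to maximize $g_i$ over the rectangle by checking its corners would require controlling $g_i(l_{i+1},|x_{i+1}|-1)$, and hence $l_{i+1}$, about which nothing is assumed; the device of freezing $z$ at the boundary value $|x_{i+1}|$ and establishing monotonicity in $w$ separately works precisely because the sign of $\partial_w g_i$ is $w$-independent. The quantitative core is then the inequality $A-\epsilon_{i+1}D>kDb_{i+1}^{k}/|x_{i+1}|$, which is exactly calibrated to the lower bound imposed on $|x_{i+1}|$: the factor $\epsilon_i-\epsilon_{i+1}$ in that hypothesis is precisely what survives after the $b_i^{-k}l_i|x_i|$ terms cancel in the telescoping of $A$ and $D$, and the comparison $b_i\le b_{i+1}$ is what makes both $b_{i+1}^{-k}L_{i-1}\le S_{L_{i-1}}^{(k)}\le L_{i-1}$ available.
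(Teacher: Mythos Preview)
Your argument is correct and follows essentially the same strategy as the paper: write $g_i$ as a quotient of affine functions, observe that the sign of each partial derivative is constant in that variable, and verify the required inequalities using $|x_i|>4k$ for the $z$-direction and $|x_{i+1}|>kb_{i+1}^k/(\epsilon_i-\epsilon_{i+1})$ for the $w$-direction. The one cosmetic difference is that the paper proves $g_i$ is decreasing in $w$ for \emph{all} $z$ (via $CG>DF$, in its notation), while you establish it only along $z=|x_{i+1}|$ and then chain $g_i(w,z)<g_i(w,|x_{i+1}|)\le g_i(0,|x_{i+1}|)$; this is a slightly weaker intermediate inequality obtained from the same hypotheses by the same telescoping of the $b_i^{-k}l_i|x_i|$ terms.
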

\begin{proof} 
We note that $g_i (w,z)$ is a rational function of $w$ and $z$ of the form
\begin{equation*}
g_i (w,z)=\frac {C+Dw+Ez} {F+Gw+Hz}
\end{equation*}
where
\begin{equation*}
C=L_{i-1}+\epsilon_i b_i^{-k} l_i |x_i|+k(l_i+1), \ D=\epsilon_{i+1}b_{i+1}^{-k} |x_{i+1}|+k, \ E=1,
\end{equation*}
\begin{equation*}
F=S_{L_i}, \ G=b_{i+1}^{-k} |x_{i+1}| \ \textrm{and \ } H=b_{i+1}^{-k}.
\end{equation*}
We will show that if we fix $z$, then $g_i(w,z)$ is a decreasing function of $w$ and if we fix $w$, then $g_i(w,z)$ is an increasing function of $z$.  To see this, we compute the partial derivatives:
\begin{equation*}
\frac {\partial g_i} {\partial w} (w,z)=\frac {D(F+Gw+Hz)-G(C+Dw+Ez)} {(F+Gw+Hz)^2}=\frac {D(F+Hz)-G(C+Ez)} {(F+Gw+Hz)^2};
\end{equation*}
\begin{equation*}
\frac {\partial g_i} {\partial z} (w,z)=\frac {E(F+Gw+Hz)-H(C+Dw+Ez)} {(F+Gw+Hz)^2}=\frac {E(F+Gw)-H(C+Dw)} {(F+Gw+Hz)^2}.
\end{equation*}
Thus, the sign of $\frac {\partial g_i} {\partial w} (w,z)$ does not depend on $w$ and the sign of $\frac {\partial g_i} {\partial z} (w,z)$ does not depend on $z$.  We will first show that $g_i (w,z)$ is an increasing function of $z$ by verifying that
\begin{equation}\label{eqn:251}
E(F+Gw)>H(C+Dw).
\end{equation}
Let
$$
S_i^*=b_{i+1}^{-k}L_{i-1}+\epsilon_i b_i^{-k} b_{i+1}^{-k} l_i |x_i|+b_{i+1}^{-k} k (l_i+1).
$$
Thus, $(\ref{eqn:251})$ can be written as
\begin{equation}\label{eqn:254}
S_{L_i} +  \Bigg[ b_{i+1}^{-k}|x_{i+1}|w \Bigg] > S_i^* + \Bigg[ b_{i+1}^{-k}(\epsilon_{i+1} b_{i+1}^{-k}|x_{i+1}| + k )w \Bigg].
\end{equation}
In order to show that $S_{L_i} > S_i^*$, we first note that
\begin{equation*}
S_{L_i}=S_{L_{i-1}}+b_i^{-k}l_i |x_i|.
\end{equation*}
Since $S_{L_{i-1}} \geq b_{i+1}^{-k}L_{i-1}$, we need to show that
\begin{equation}\label{eqn:252} 
b_i^{-k}l_i |x_i| > b_{i+1}^{-k} (\epsilon_i b_i^{-k}l_i |x_i|+k (l_i+1)).
\end{equation}
However, by rearranging terms, $(\ref{eqn:252})$ is equivalent to
\begin{equation}\label{eqn:253} 
|x_i| > \frac {l_i+1} {l_i} \cdot \left( \frac {b_i} {b_{i+1}} \right)^k \cdot \frac {1} {1-b_{i+1}^{-k}\epsilon_i} \cdot k.
\end{equation}
Since $l_i >0$, we know that $(l_i+1)/l_i \leq 2$.  Since $b_{i+1} \geq 2$ and $\epsilon_i < 1$, we know that $(1-b_{i+1}^{-k}\epsilon_i)^{-1} < 2$.  Additionally, $\{b_i\}$ non-decreasing implies $\left( \frac {b_i} {b_{i+1}} \right)^k \leq 1$. Therefore,
$$
\frac {l_i+1} {l_i} \cdot \left( \frac {b_i} {b_{i+1}} \right)^k \cdot \frac {1} {1-b_{i+1}^{-k}\epsilon_i} \cdot k < 2 \cdot 1 \cdot 2 \cdot k=4k.
$$
But, $|x_i|>4k$. So $(\ref{eqn:253})$ is satisfied and thus $S_{L_i} > S_i^*$.

The last step to verifying $(\ref{eqn:254})$ is to show that
\begin{equation*}
b_{i+1}^{-k}|x_{i+1}|w \geq b_{i+1}^{-k}(\epsilon_{i+1} b_{i+1}^{-k}|x_{i+1}|+k) w.
\end{equation*}
However, this is equivalent to
\begin{equation}\label{eqn:255} 
|x_{i+1}|w \geq (\epsilon_{i+1} b_{i+1}^{-k}|x_{i+1}|+k) w.
\end{equation}
Clearly, $(\ref{eqn:255})$ is true if $w=0$.  If $w>0$ we can cancel out the $w$ term on each side and rewrite $(\ref{eqn:255})$ as
\begin{equation*}
|x_{i+1}| \geq \frac {1} {1-b_{i+1}^{-k}\epsilon_{i+1}} \cdot k.
\end{equation*}
Similar to $(\ref{eqn:253})$, $(1-b_{i+1}^{-k}\epsilon_{i+1})^{-1} k \leq 2 k < |x_i| < |x_{i+1}|$.  Thus $(\ref{eqn:251})$ is satisfied and $g_i(w,z)$ is an increasing function of $z$.

Due to the difficulty of directly showing that $\frac {\partial g_i} {\partial w} (w,z)<0$, we will proceed as follows: because the sign of $\frac {\partial g_i} {\partial w} (w,z)$ does not depend on $w$, we will know that $g_i (w,z)$ is decreasing in $w$ if for each $z$
\begin{equation*}
\lim_{w \rightarrow \infty} g_i (w,z)<g_i(0,z).
\end{equation*}
Since $g_i(w,z)$ is an increasing function of $z$, we know for all $z$ that $g_i(0,0)<g_i(0,z)$.  Hence, it is enough to show that
$$
\lim_{w \rightarrow \infty} g_i (w,z) < g_i(0,0).
$$
Since $\lim_{w \rightarrow \infty} g_i (w,z)=D/G$ and $g_i(0,0)=C/F$, it is sufficient to show that $CG>DF$.  We proceed as follows:

\begin{equation*}
\left( L_{i-1}+\epsilon_i b_i^{-k} l_i |x_i|+k(l_i+1) \right) b_{i+1}^{-k} |x_{i+1}|  
\end{equation*}
\begin{equation*}
> \left( \epsilon_{i+1}b_{i+1}^{-k} |x_{i+1}|+k \right) S_{L_i}=\left( \epsilon_{i+1}b_{i+1}^{-k} |x_{i+1}|+k \right) (S_{L_{i-1}}+b_i^{-k}l_i |x_i|)
\end{equation*}
\begin{equation*}
\Leftrightarrow L_{i-1} b_{i+1}^{-k} |x_{i+1}| + \epsilon_i b_i^{-k} b_{i+1}^{-k} l_i |x_i| |x_{i+1}| + k b_{i+1}^{-k} (l_i+1)  |x_{i+1}|
\end{equation*}
\begin{equation}\label{eqn:256}
>\left( \epsilon_{i+1}b_{i+1}^{-k} |x_{i+1}|+k \right) S_{L_{i-1}} + \left( \epsilon_{i+1}b_{i+1}^{-k} |x_{i+1}|+k \right)b_i^{-k}l_i |x_i|.
\end{equation}
We will verify $(\ref{eqn:256})$ by showing that
\begin{equation}\label{eqn:257}
L_{i-1} b_{i+1}^{-k} |x_{i+1}|>\left( \epsilon_{i+1}b_{i+1}^{-k} |x_{i+1}|+k \right) S_{L_{i+1}} \textrm{ \ and}
\end{equation}
\begin{equation}
\epsilon_i b_i^{-k} b_{i+1}^{-k} l_i |x_i| |x_{i+1}| > \left( \epsilon_{i+1}b_{i+1}^{-k} |x_{i+1}|+k \right)b_i^{-k}l_i |x_i|.
\end{equation}
Since $L_{i-1}>S_{L_{i-1}}$, in order to prove inequality $(\ref{eqn:256})$, it is enough to show that
\begin{equation*}
b_{i+1}^{-k} |x_{i+1}|>\epsilon_{i+1}b_{i+1}^{-k} |x_{i+1}|+k,
\end{equation*}
which is equivalent to
\begin{equation*}
|x_{i+1}| > \frac {k b_{i+1}^k} {1-\epsilon_{i+1} }.
\end{equation*}
But $\epsilon_i<1$, so 
$$
\frac {k b_{i+1}^k} {1-\epsilon_{i+1} }<\frac {k b_{i+1}^k} {\epsilon_i-\epsilon_{i+1} }<|x_{i+1}|.
$$

To verify the second inequality we cancel the common term $b_i^{-k} l_i |x_i|$ on each side to get
\begin{equation*}
\epsilon_i b_{i+1}^{-k} |x_{i+1}| > \epsilon_{i+1}b_{i+1}^{-k} |x_{i+1}|+k,
\end{equation*}
which is equivalent to
\begin{equation*}
|x_{i+1}| > \frac {k b_{i+1}^k} {\epsilon_i-\epsilon_{i+1}},
\end{equation*}
which is given in the hypotheses.

So, we may conclude that $g_i (w,z)$ is a decreasing function of $w$ and an increasing function of $z$.  We can thus achieve an upper bound on $g_i (w,z)$ by setting $w=0$ and $z=|x_{i+1}|$:
\begin{equation*}
g_i (w,z)<g_i (0,|x_{i+1}|)=\frac {(L_{i-1}+\epsilon_i b_i^{-k}l_i|x_i|+k(l_i+1)) +|x_{i+1}|} {S_{L_i}+b_{i+1}^{-k}|x_{i+1}|}.
\end{equation*}
\end{proof}

For convenience we will define
$$
\epsilon_i'=\frac {(L_{i-1}+\epsilon_i b_i^{-k}l_i|x_i|+k(l_i+1)) +|x_{i+1}|} {S_{L_i}+b_{i+1}^{-k}|x_{i+1}|}.
$$
Thus, under the conditions of \refl{l2.4} and \refl{l2.5},
\begin{equation}
\left| \frac {N_n^Q(B,x)} {Q_n^{(k)}} -1 \right| < 2\epsilon_i'+\frac {S_n^{(k)}-Q_n^{(k)}} {S_n^{(k)}}.
\end{equation}
We will need to prove the following two lemmas in order to show that $\epsilon_i' \rightarrow 0$:

\begin{lem}\labl{l2.6} If $k \in R(W)$ then $\lim_{i \rightarrow \infty} \frac {k(l_i+1)} {b_i^{-k}l_i |x_i|}=0.$ \end{lem}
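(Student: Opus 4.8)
The plan is to simplify the quotient algebraically and reduce the claim to the $W$-good hypothesis $(\ref{eqn:good1})$. Clearing the denominator,
$$
\frac {k(l_i+1)} {b_i^{-k}l_i |x_i|}=k\cdot\frac {l_i+1} {l_i}\cdot\frac {b_i^k} {|x_i|},
$$
so it suffices to show that $b_i^k/|x_i|\to 0$ while the middle factor $(l_i+1)/l_i$ stays bounded.

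For the main factor I would invoke $(\ref{eqn:good1})$, which says $|x_i|=\omega\!\left(b_i^k/(\epsilon_{i-1}-\epsilon_i)\right)$ for every $k\in R(W)$. Since $\{\epsilon_i\}$ strictly decreases to $0$ starting from $\epsilon_1<1$, we have $0<\epsilon_{i-1}-\epsilon_i\le\epsilon_1<1$, hence $b_i^k/(\epsilon_{i-1}-\epsilon_i)\ge b_i^k$. Combining these, $|x_i|=\omega(b_i^k)$, i.e. $b_i^k/|x_i|\to 0$.

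For the middle factor, observe that a $W$-good sequence forces $l_i>0$ for all large $i$ (otherwise $(\ref{eqn:good3})$ would be undefined), so $(l_i+1)/l_i\le 2$ eventually, and in fact $(l_i+1)/l_i\to 1$. Multiplying the two estimates, $k\cdot\frac{l_i+1}{l_i}\cdot\frac{b_i^k}{|x_i|}\le 2k\cdot\frac{b_i^k}{|x_i|}\to 0$, which is the desired limit.

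I do not expect any genuine obstacle here: the substance is entirely contained in $(\ref{eqn:good1})$. The only points that need a moment's care are the passage from the ratio $b_i^k/(\epsilon_{i-1}-\epsilon_i)$ occurring in $(\ref{eqn:good1})$ to the bare quantity $b_i^k$, which is handled by the crude bound $\epsilon_{i-1}-\epsilon_i<1$, and the bookkeeping remark that $l_i$ is eventually positive so that the expression in the statement is well defined.
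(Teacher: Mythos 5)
Your proof is correct and follows essentially the same route as the paper's: rewrite the quotient as $k\cdot\frac{l_i+1}{l_i}\cdot\frac{b_i^k}{|x_i|}$, bound $(l_i+1)/l_i$ by $2$ for large $i$, and conclude from $(\ref{eqn:good1})$ that $b_i^k/|x_i|\to 0$. Your added remarks (that $\epsilon_{i-1}-\epsilon_i<1$ justifies passing from $(\ref{eqn:good1})$ to $|x_i|=\omega(b_i^k)$, and that $l_i$ is eventually positive) are details the paper leaves implicit.
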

\begin{proof} 
\begin{equation*}
\frac {k(l_i+1)} {b_i^{-k}l_i |x_i|} \leq \frac {b_i^{k} 2k l_i} {l_i |x_i|}=\frac {b_i^{k} 2k} {|x_i|} \rightarrow 0
\end{equation*}
by $(\ref{eqn:good1})$. 
\end{proof}

\begin{lem}\labl{l2.7}
If $k \in R(W)$ then $\lim_{i \rightarrow \infty} \frac {\sum_{j=1}^{i-2} l_j |x_j|} {b_i^{-k} l_i |x_i|} = 0$. 
\end{lem}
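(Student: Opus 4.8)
The plan is to bound the numerator $\sum_{j=1}^{i-2} l_j|x_j|$ crudely by replacing every summand with the largest one, and then to invoke the growth condition $(\ref{eqn:good2})$ from the definition of a $W$-good sequence. First I would note that $\{l_j\}$ is non-decreasing (this is built into the notion of a BFF) and that $\{|x_j|\}$ is non-decreasing (since $\{x_i\}$ is $W$-good). Hence for every index $j \le i-2$ we have $l_j|x_j| \le l_{i-1}|x_{i-1}|$, so that
\begin{equation*}
\sum_{j=1}^{i-2} l_j|x_j| \le (i-2)\, l_{i-1}|x_{i-1}| \le i\, l_{i-1}|x_{i-1}|.
\end{equation*}
(For $i \le 2$ the sum is empty; only large $i$ matters.)

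Next I would divide through by $b_i^{-k} l_i|x_i|$ to obtain
\begin{equation*}
\frac{\sum_{j=1}^{i-2} l_j|x_j|}{b_i^{-k} l_i|x_i|} \le i\, b_i^{k} \cdot \frac{l_{i-1}}{l_i} \cdot \frac{|x_{i-1}|}{|x_i|}.
\end{equation*}
Since $k \in R(W)$, the hypothesis $(\ref{eqn:good2})$ gives $\dfrac{l_{i-1}}{l_i}\cdot\dfrac{|x_{i-1}|}{|x_i|} = o\!\left(i^{-1} b_i^{-k}\right)$, so after multiplying by $i\, b_i^{k}$ the right-hand side tends to $0$. The left-hand side is non-negative, so the squeeze theorem yields the claimed limit.

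The only point requiring a little care is the bookkeeping of the factor $i$: bounding each of the $i-2$ terms by the largest one costs exactly a factor of order $i$, and this is precisely the factor that the $i^{-1}$ appearing in $(\ref{eqn:good2})$ — as opposed to a bare $b_i^{-k}$ — is designed to absorb. Beyond that there is no genuine obstacle; the lemma is a one-line monotonicity estimate combined with the defining growth rate of a $W$-good sequence, in the same spirit as the proof of \refl{l2.6}.
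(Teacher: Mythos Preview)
Your argument is correct and matches the paper's own proof almost verbatim: both bound the sum by $i$ times the largest term using monotonicity of $\{l_j\}$ and $\{|x_j|\}$, then kill the resulting quotient with condition~$(\ref{eqn:good2})$. The only cosmetic difference is that the paper bounds each summand by $l_{i-2}|x_{i-2}|$ and then factors the ratio through $l_{i-1}|x_{i-1}|$, whereas you bound directly by $l_{i-1}|x_{i-1}|$ and save a step.
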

\begin{proof} 
Since $\{ l_j \}$ and $\{ |x_j| \}$ are non-decreasing sequences, then
$$
\frac {\sum_{j=1}^{i-2} l_j |x_j|} {b_i^{-k} l_i |x_i|} < \frac {i l_{i-2} |x_{i-2}|} {b_i^{-k} l_i |x_i|} = \left( \frac {l_{i-2} |x_{i-2}|} {l_{i-1} |x_{i-1}|} \right) \cdot \left( ib_i^{k} \frac {l_{i-1} |x_{i-1}|} {l_i |x_i|} \right).
$$
But, by $(\ref{eqn:good2})$, $\frac {l_{i-2} |x_{i-2}|} {l_{i-1} |x_{i-1}|} \rightarrow 0$ and $ib_i^{k} \frac {l_{i-1} |x_{i-1}|} {l_i |x_i|} \rightarrow 0$. 
\end{proof}

\begin{lem}\labl{l2.8}
If $k \in R(W)$ then $\lim_{i \rightarrow \infty} \epsilon_i'=0.$
\end{lem}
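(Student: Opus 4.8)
The plan is to bound $\epsilon_i'$ from above by a sum of four simple fractions and show that each of them tends to $0$. The key preliminary observation is that the denominator of $\epsilon_i'$ dominates $b_i^{-k} l_i |x_i|$:
\begin{equation*}
S_{L_i}+b_{i+1}^{-k}|x_{i+1}| \geq S_{L_i}=\sum_{j=1}^{i} b_j^{-k} l_j |x_j| \geq b_i^{-k} l_i |x_i|.
\end{equation*}
Since every term in the numerator of $\epsilon_i'$ is non-negative, dividing each numerator term by the smaller quantity $b_i^{-k} l_i |x_i|$ gives (assuming, as we may for all large $i$, that $l_i>0$)
\begin{equation*}
0 \leq \epsilon_i' \leq \frac {L_{i-1}} {b_i^{-k} l_i |x_i|} + \epsilon_i + \frac {k(l_i+1)} {b_i^{-k} l_i |x_i|} + \frac {|x_{i+1}|} {b_i^{-k} l_i |x_i|}.
\end{equation*}

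I would then dispatch the four terms on the right one at a time. The term $\epsilon_i$ tends to $0$ since $\{\epsilon_i\}$ strictly decreases to $0$ by the definition of a BFF. The term $k(l_i+1)/(b_i^{-k} l_i |x_i|)$ tends to $0$ by \refl{l2.6}. The term $|x_{i+1}|/(b_i^{-k} l_i |x_i|)$ equals $b_i^{k} \cdot l_i^{-1} \cdot |x_{i+1}|/|x_i|$, which tends to $0$ by $(\ref{eqn:good3})$. Finally, for $L_{i-1}/(b_i^{-k} l_i |x_i|)$, I would split $L_{i-1}=l_{i-1}|x_{i-1}|+\sum_{j=1}^{i-2} l_j |x_j|$; the quantity $\left(\sum_{j=1}^{i-2} l_j |x_j|\right)/(b_i^{-k} l_i |x_i|)$ tends to $0$ by \refl{l2.7}, and $l_{i-1}|x_{i-1}|/(b_i^{-k} l_i |x_i|)$ equals $b_i^{k} \cdot (l_{i-1}/l_i) \cdot (|x_{i-1}|/|x_i|)$, which is $o(i^{-1})$ by $(\ref{eqn:good2})$ and hence tends to $0$. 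Summing these four estimates yields $\epsilon_i' \to 0$.

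There is no genuine obstacle here; the one point that needs care is to keep $S_{L_i}$ (and in particular the estimate $S_{L_i}\ge b_i^{-k} l_i|x_i|$) in the denominator rather than bounding the denominator below only by $b_{i+1}^{-k}|x_{i+1}|$, since the numerator term $|x_{i+1}|$ is controlled only in comparison with $l_i|x_i|$, via $(\ref{eqn:good3})$, and not in comparison with $|x_{i+1}|$ itself. All remaining work is a direct appeal to the $W$-goodness conditions $(\ref{eqn:good2})$ and $(\ref{eqn:good3})$ and to \refl{l2.6} and \refl{l2.7}.
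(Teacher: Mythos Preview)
Your argument is correct and is essentially identical to the paper's own proof: both bound the denominator below by $b_i^{-k} l_i |x_i|$, split the resulting upper bound into the same five terms, and dispatch them via $(\ref{eqn:good2})$, $(\ref{eqn:good3})$, \refl{l2.6}, \refl{l2.7}, and $\epsilon_i\to 0$. No changes are needed.
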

\begin{proof} 
$$\epsilon_i'=\frac {\sum_{j=1}^{i-1} l_j |x_j|+\epsilon_i b_i^{-k} l_i|x_i|+|x_{i+1}|+k(l_i+1)} {\sum_{j=1}^{i-1} j^{-k} l_j |x_j|+b_i^{-k} l_i|x_i|+b_{i+1}^{-k}|x_{i+1}|} 
$$
$$
< \frac {\sum_{j=1}^{i-1} l_j |x_j|+\epsilon_i b_i^{-k} l_i|x_i|+|x_{i+1}|+k(l_i+1)} {b_i^{-k} l_i|x_i|}
$$
$$
=\frac {\sum_{j=1}^{i-2} l_j |x_j|} {b_i^{-k} l_i |x_i|}+\frac{l_{i-1}|x_{i-1}|} {b_i^{-k} l_i |x_i|}+\epsilon_i +\frac {|x_{i+1}|} {b_i^{-k} l_i |x_i|}+\frac {k(l_i+1)} {b_i^{-k} l_i |x_i|}.
$$
However, each of these terms converges to $0$ by $(\ref{eqn:good2})$, $(\ref{eqn:good3})$, \refl{l2.6} and \refl{l2.7}. \end{proof}

\section{Proof of Main Theorem 1.15}

{\bf Main Theorem 1.15} Let $W$ be a BFF and $\{x_i\}_{i=1}^{\infty}$ a $W$-good sequence. If $k \in R(W)$ then $x$ is $Q$-normal of order $k$.  If $k_i \rightarrow \infty$, then $x$ is $Q$-normal.
\begin{proof} 
Let $b$ be a positive integer, $k \in R(W)$ and let $B$ be an arbitrary block of length $k$ in base $b$.  Since $|x_i| = \omega \left(\frac {b_i^k} {\epsilon_{i-1}-\epsilon_i} \right)$, there exists $n$ large enough so that $|x_i|$ and $|x_{i+1}|$ satisfy the hypotheses of \refl{l2.5}.  Additionally, assume that $n$ is large enough so that $k \leq k_i$, $b \leq p_i$ and $S_n^{(k)}/Q_n^{(k)}<2$. Thus, by \refl{l2.4} and \refl{l2.5} 
\begin{equation} \label{eqn:T211}
\left| \frac {N_n^Q(B,x)} {Q_n^{(k)}} -1 \right|<2 \epsilon_i'+\frac {S_n^{(k)}-Q_n^{(k)}} {S_n^{(k)}}.
\end{equation}
But by \refl{l2.3}
\begin{equation} \label{eqn:T212}
\lim_{n \rightarrow \infty} \frac {S_n^{(k)}-Q_n^{(k)}} {S_n^{(k)}}=0.
\end{equation}
However, $\lim_{n \rightarrow \infty} i=\infty$. So, by \refl{l2.8}
\begin{equation} \label{eqn:T213}
\lim_{n \rightarrow \infty} \epsilon_i'=0.
\end{equation}
Thus, by $(\ref{eqn:T211})$, $(\ref{eqn:T212})$ and $(\ref{eqn:T213})$ 
$$
\lim_{n \rightarrow \infty} \left| \frac {N_n^Q(B,x)} {Q_n^{(k)}} -1 \right|=0.
$$
So,
$$
\lim_{n \rightarrow \infty} \frac {N_n^Q(B,x)} {Q_n^{(k)}} =1
$$
and we may conclude that $x$ is $Q$-normal of order $k$. 
\end{proof}

\section{Example of a $Q$-normal number for a specific $Q$}

In this section we will construct a specific example of a number that is $Q$-normal for a certain $Q$.  Recall that $C_{b,w}$ is the block in base $b$ formed by concatenating all the blocks in base $b$ of length $w$ in lexicographic order.  Since there will be $b^w$ such blocks and each is of length $w$, we arrive at
\begin{equation}\label{eqn:Cbw}
|C_{b,w}|=wb^w.
\end{equation}
We will show in \refl{l4.1} and \refl{l4.2} that $C_{b,w}$ is $(\epsilon,K,\mu)$-normal for appropriate choices of $\epsilon$, $K$ and $\mu$.  We will use this information to construct a good sequence and apply \refmt{thm3.1} to arrive at our $Q$-normal number.

\begin{lem}\labl{l4.1}
Let $n=|C_{b,w}|$.
\begin{enumerate}
\item Suppose that $1 \leq k \leq w$ and $B$ is a block of length $k$ in base $b$.  Then
\begin{equation}
(w-k+1)b^{w-k} \leq N_{n}(B,C_{b,w}) \leq wb^{w-k}.
\end{equation}
\item If $B$ is a block in base $b'>b$ and $B$ is not a block in base $b$, then $N_n(B,C_{b,w})=0$.
\end{enumerate}
\end{lem}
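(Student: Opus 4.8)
The plan is to count occurrences of a block $B$ of length $k$ in $C_{b,w}$ by classifying the occurrences according to whether they lie entirely within a single length-$w$ block or straddle a boundary between two consecutive length-$w$ blocks. For part (1), first I would count the occurrences that sit inside one of the $b^w$ constituent blocks. Fix a block $D$ of length $w$ in base $b$; the number of starting positions $1 \le s \le w-k+1$ at which $B$ can occur inside $D$ is $w-k+1$, and for each such position $s$, the number of length-$w$ blocks $D$ whose digits in positions $s,\dots,s+k-1$ agree with $B$ is exactly $b^{w-k}$ (the remaining $w-k$ coordinates are free). Hence the count of ``interior'' occurrences is exactly $(w-k+1)b^{w-k}$, which already gives the lower bound since all remaining occurrences are nonnegative. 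For the upper bound I would additionally bound the ``boundary'' occurrences: there are $b^w - 1$ boundaries between consecutive length-$w$ blocks, and each boundary can contribute at most $k-1$ occurrences (those whose starting position is among the last $k-1$ positions of a block). Thus
\begin{equation}
N_n(B,C_{b,w}) \le (w-k+1)b^{w-k} + (b^w-1)(k-1).
\end{equation}
To finish I need this to be at most $wb^{w-k}$, i.e. $(b^w-1)(k-1) \le (k-1)b^{w-k}$. This is false in general, so the bookkeeping must be sharper: in fact a length-$k$ block straddling a boundary is determined by how it splits, say $j$ digits at the end of the left block and $k-j$ at the start of the right block for $1 \le j \le k-1$; the number of ordered pairs of consecutive length-$w$ blocks realizing a fixed such split is $b^{w-j} \cdot b^{w-(k-j)} = b^{2w-k}$, but the number of \emph{adjacent} pairs in $C_{b,w}$ realizing it is far smaller. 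I expect the clean route is instead to observe that for $1 \le j \le k-1$ the occurrences of $B$ with a $j/(k-j)$ boundary split, summed together with the interior occurrences having starting position in $\{w-j+1,\dots,w\}$ reading into the next block, telescope so that the \emph{total} over all starting positions $s \in \{1,\dots,w\}$ of blocks that match $B$ in the appropriate cyclic-type sense is at most $w b^{w-k}$. The cleanest phrasing: for each residue $s \in \{1,\dots,w\}$, the set of occurrences of $B$ starting in position $s$ of some constituent block (allowing overflow into the next block) has size at most $b^{w-k}$, because such an occurrence is determined by choosing the constituent block's free coordinates, and there are at most $b^{w-k}$ choices; since $s$ ranges over $w$ values and every occurrence of $B$ in $C_{b,w}$ starts in position $s$ of some constituent block for exactly one such $s$, we get $N_n(B,C_{b,w}) \le w b^{w-k}$.

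For part (2), suppose $B$ is a block in base $b' > b$ that is not a block in base $b$; then some entry of $B$ is at least $b$. But every digit appearing in $C_{b,w}$ lies in $\{0,1,\dots,b-1\}$, so no occurrence of $B$ is possible, giving $N_n(B,C_{b,w}) = 0$.

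The main obstacle is the upper bound in part (1): naively adding interior plus boundary contributions overshoots $wb^{w-k}$, so the argument must avoid double-counting by organizing occurrences according to their starting position modulo the block length $w$ and showing each residue class contributes at most $b^{w-k}$ occurrences. Once that reorganization is in place the rest is routine counting, and part (2) is immediate.
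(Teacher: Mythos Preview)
Your decomposition into interior and boundary occurrences, your lower bound, and your organization of the upper bound by starting position $s\in\{1,\dots,w\}$ all match the paper's approach. The gap is in justifying the claim that each \emph{boundary} residue $s\in\{w-k+2,\dots,w\}$ contributes at most $b^{w-k}$ occurrences. You write that ``such an occurrence is determined by choosing the constituent block's free coordinates, and there are at most $b^{w-k}$ choices,'' but for such $s$ the block $B$ directly fixes only the last $w-s+1<k$ digits of $C_i$; the remaining constraint lives on $C_{i+1}$, and since $C_{i+1}=C_i+1$ in base $b$, carries make it unclear how many digits of $C_i$ this actually pins down.

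The paper supplies the missing step by counting free digits of $C_{i+1}$ rather than of $C_i$. The key observation is the successor relation $\phi(C_{i+1})=\phi(C_i)+1$: since the last $w-s+1$ digits of $C_i$ are fixed by $B$, the last $w-s+1$ digits of $C_{i+1}$ are determined modulo $b^{w-s+1}$, hence determined outright. Together with the first $k-(w-s+1)$ digits of $C_{i+1}$ (also fixed by $B$), this pins down $k$ disjoint digits of $C_{i+1}$ (disjoint because $k\le w$), leaving $w-k$ free and giving at most $b^{w-k}$ choices. Summing over the $k-1$ boundary residues gives $(k-1)b^{w-k}$, and adding the interior count $(w-k+1)b^{w-k}$ yields $wb^{w-k}$. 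Once you insert this use of the successor relation, your plan is complete and coincides with the paper's proof.
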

\begin{proof}
The second case is trivial as $C_{b,w}$ is a block in base $b$.

Suppose that $B$ is a block of length $k$ in base $b$.  Let $C_1,C_2,\ldots,C_{b^w}$ be the blocks of length $w$ in base $b$ written in lexicographic order.  Thus, $C_{b,w}=1C_11C_2 \ldots 1C_{b^w}$.  We will achieve a lower bound for $N_n(B, C_{b,w})$ by counting the number of occurrences of $B$ inside the blocks $C_i$.  In other words, we will use the estimate
\begin{equation*}
\sum_{i=1}^{b^w} N_w(B,C_i) \leq N_n(B, C_{b,w}).
\end{equation*}
For each $j$ such that $1 \leq j \leq w-k+1$, we will count the number of $i$ such that there is a copy of $B$ at position $j$ in $C_i$.  Such $j$ will correspond to copies of $B$ that don't straddle the boundary between $C_i$ and $C_{i+1}$.  Since $B$ is of length $k$ and each $C_i$ is of length $w$, there will be $w-k$ positions that are undetermined and can take on any of the values $0,1,\ldots,b-1$.  Thus, there are $b^{w-k}$ values of $i$ such that a copy of $B$ is at position $j$ of $C_i$.  Since there are $w-k+1$ choices for $j$, we arrive at the estimate
\begin{equation}
(w-k+1)b^{w-k} \leq N_n(B, C_{b,w}).
\end{equation}

In order to arrive at an upper bound for $N_n(B, C_{b,w})$, we will find an upper bound for the number of copies of $B$ that straddle the boundaries between the blocks $C_i$ and $C_{i+1}$ and add this to the number of copies of $B$ that occur inside each of the $C_i$.  These will correspond to a copy of $B$ starting at position $j$ of $C_i$ for $w-k+2 \leq j \leq w$ and finishing in $C_{i+1}$.  Given a block $D=(d_1,d_2,\ldots,d_t)$ in base $b$, define
$$
\phi(D)=d_1 b^{t-1}+d_2 b^{t-2}+\ldots+d_{t-1}b+d_t.
$$
Thus,
\begin{equation}\label{eqn:311}
\phi(C_{i+1})=\phi(C_i)+1.
\end{equation}
If a copy of $B$ is at position $j$ of $C_i$, then the first $w-j+1$ digits of $B$ are at the end of $C_i$ and the last $k-(w-j+1)$ digits of $B$ are at the beginning of $C_{i+1}$.  However, the last $w-j+1$ digits of $C_{i+1}$ are uniquely determined by $B$ from $(\ref{eqn:311})$.  The first $k-(w-j+1)$ have already directly been determined by $B$ so there are at most $w-(w-j+1)-(k-(w-j+1))=w-k$ undetermined digits of $C_{i+1}$, giving $b^{w-k}$ ways to pick $C_{i+1}$.  Additionally, there are $k-1$ positions $j$ that straddle the boundaries giving an upper bound of $(k-1)b^{w-k}$ copies of $B$ that lie on the boundaries.  Thus,
\begin{equation}
N_n(B, C_{b,w}) \leq (w-k+1)b^{w-k}+(k-1)b^{w-k}=wb^{w-k}.
\end{equation}
\end{proof}

\begin{lem}\labl{l4.2}
If $K<w$ and $\epsilon=\frac {K} {w}$, then $C_{b,w}$ is $(\epsilon,K,\lambda_b)$-normal.
\end{lem}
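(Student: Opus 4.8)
The plan is to verify \refd{def1.12} directly, taking $\mu = \lambda_b$ and $k = K$, and reading off the block counts from \refl{l4.1}. Fix a block $B$ of length $m \le K$. Since $|C_{b,w}| = wb^w$, the inequality to be checked is
\[
\lambda_b^{(m)}(B)\, w b^w (1-\epsilon) \;\le\; N_{|C_{b,w}|}(B, C_{b,w}) \;\le\; \lambda_b^{(m)}(B)\, w b^w (1+\epsilon),
\]
and I would treat separately the case in which $B$ is a block in base $b$ and the case in which it is not.

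If some digit of $B$ is at least $b$, then $B$ is not a block in base $b$; because $\lambda_b$ must be a weighting (and $\sum_{j=0}^{b-1} b^{-1} = 1$ already exhausts the total mass at level $1$), $\lambda_b^{(m)}(B) = 0$, while the second part of \refl{l4.1} gives $N_{|C_{b,w}|}(B, C_{b,w}) = 0$; both inequalities then hold as equalities.

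If instead $B$ is a block of length $m$ in base $b$, then $\lambda_b^{(m)}(B) = b^{-m}$ by \refd{def1.10}, so it suffices to show
\[
w b^{w-m}(1-\epsilon) \;\le\; N_{|C_{b,w}|}(B, C_{b,w}) \;\le\; w b^{w-m}(1+\epsilon).
\]
Since $1 \le m \le K < w$, the first part of \refl{l4.1} with $k = m$ gives $(w-m+1) b^{w-m} \le N_{|C_{b,w}|}(B, C_{b,w}) \le w b^{w-m}$; the upper bound follows at once because $\epsilon > 0$. For the lower bound I would use $\epsilon = K/w$ to rewrite $w b^{w-m}(1-\epsilon) = (w-K) b^{w-m}$ and note that $m \le K$ forces $w - m + 1 > w - K$, whence $(w-m+1)b^{w-m} > (w-K)b^{w-m} = w b^{w-m}(1-\epsilon)$.

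I do not expect any genuine difficulty: the statement is essentially a repackaging of \refl{l4.1} together with the identity $w(1-\epsilon) = w - K$. The only two points that deserve a sentence are that $\lambda_b^{(m)}$ vanishes on blocks that contain a digit $\ge b$ (so that such blocks cannot violate the normality condition), and that the hypothesis $\epsilon = K/w$ is exactly calibrated to absorb the up to $m - 1 \le K - 1$ positions adjacent to each internal boundary of $C_{b,w}$ that the ``inside the $C_i$'' count in \refl{l4.1} discards.
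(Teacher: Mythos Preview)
Your proof is correct and follows essentially the same approach as the paper's: both reduce the claim to the bounds of \refl{l4.1} and the identity $w(1-\epsilon)=w-K$. Your treatment is slightly more careful in that you explicitly dispose of blocks containing a digit $\ge b$ by noting $\lambda_b^{(m)}(B)=0$ there, whereas the paper silently restricts to blocks in base $b$.
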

\begin{proof}
Let $n=|C_{b,w}|=wb^w$ and let $B$ be a block of length $k \leq K$ in base $b$. We first note that
\begin{equation}\label{eqn:321}
(w-k+1)b^{w-k}=b^{-k}n \frac {(w-k+1)b^w} {n}=\lambda_b^{(k)}(B)n \left( 1-\frac {k-1} {w} \right) > \lambda_b^{(k)}(B)n \left( 1-\frac {K} {w} \right) .
\end{equation}
We also note that
\begin{equation}\label{eqn:322}
wb^{w-k}=b^{-k}n \frac {wb^w} {n}=\lambda_b^{(k)}(B)n (1+0) < \lambda_b^{(k)}(B)n  \left( 1+\frac {K} {w} \right).
\end{equation}
Thus, by \refl{l4.1}, $(\ref{eqn:321})$ and $(\ref{eqn:322})$,
\begin{equation*}
\lambda_b^{(k)}(B)n \left( 1-\frac {K} {w} \right) < N_{n}(B,C_{b,w}) < \lambda_b^{(k)}(B)n  \left( 1+\frac {K} {w} \right).
\end{equation*}
So, $C_{b,w}$ is $(\epsilon,K,\lambda_b)$-normal. 
\end{proof}

\begin{thm}\labt{thm4.1}
Let $x_1=(0,1)$, $b_1=2$ and $l_1=0$.  For $i \geq 2$, let $x_i=C_{i,i^2}$, $b_i=i$ and $l_i=i^{3i}$. If $x$ and $Q$ are defined as in \refmt{thm3.1}, then $x$ is $Q$-normal.
\end{thm}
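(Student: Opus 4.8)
The plan is to exhibit a BFF $W$ and a $W$-good sequence whose associated basic sequence and real number are precisely the $Q$ and $x$ in the statement, and then to quote \refmt{thm3.1}. Concretely, for $i \geq 2$ I would take $\mu_i = \lambda_i$ (the uniform weighting in base $i$), $p_i = b_i = i$, $k_i = i$ and $\epsilon_i = 1/i$; for $i = 1$ I would put $k_1 = 1$, $p_1 = b_1 = 2$, $\mu_1 = \lambda_2$ and $\epsilon_1 = 3/4$ (any value in $(1/2,1)$ works, and since $l_1 = 0$ the block $x_1$ plays no real role). Set $W = \{(l_i,b_i,p_i,\epsilon_i,k_i,\mu_i)\}_{i=1}^{\infty}$. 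The first step is to check this is a BFF: $\{l_i\}$, $\{b_i\}$, $\{p_i\}$, $\{k_i\}$ are non-decreasing with $b_i \geq 2$ and $b_i,p_i \to \infty$; each $\lambda_i$ is $(i,i)$-uniform directly from the definition, so $\mu_i$ is $(p_i,b_i)$-uniform; and $\{\epsilon_i\}$ strictly decreases to $0$. Since $k_i \to \infty$, $R(W) = \{0,1,2,\ldots\}$.

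The second step is to check $\{x_i\}$ is $W$-good. Normality of the $x_i$ is exactly \refl{l4.2}: for $i \geq 2$, $x_i = C_{i,i^2}$ with $K = k_i = i < i^2 = w$ and $\epsilon_i = i/i^2 = K/w$, so $x_i$ is $(\epsilon_i,k_i,\lambda_i)$-normal; for $i = 1$ one checks by hand that $(0,1)$ is $(3/4,1,\lambda_2)$-normal. By $(\ref{eqn:Cbw})$, $|x_i| = i^2 \cdot i^{i^2} = i^{i^2+2}$ for $i \geq 2$ and $|x_1| = 2$, so $|x_i|$ is non-decreasing. It remains to verify the three growth conditions $(\ref{eqn:good1})$, $(\ref{eqn:good2})$ and $(\ref{eqn:good3})$ for every $k \in R(W)$, which I would do using the explicit formulas $|x_i| = i^{i^2+2}$, $l_i = i^{3i}$, hence $l_i|x_i| = i^{i^2+3i+2}$ and $l_{i-1}|x_{i-1}| = (i-1)^{i^2+i}$.

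For $(\ref{eqn:good1})$: $\epsilon_{i-1}-\epsilon_i \geq 1/i^2$ (it equals $1/4$ for $i=2$ and $1/(i(i-1))$ for $i \geq 3$), so $b_i^k/(\epsilon_{i-1}-\epsilon_i) \leq i^{k+2}$, and dividing by $|x_i| = i^{i^2+2}$ gives $\leq i^{k-i^2} \to 0$. For $(\ref{eqn:good2})$: using $l_{i-1}|x_{i-1}| = (i-1)^{i^2+i} \leq i^{i^2+i}$, the quantity $(l_{i-1}|x_{i-1}|/l_i|x_i|)\cdot i\cdot b_i^k$ is at most $i^{(i^2+i)-(i^2+3i+2)}\cdot i^{k+1} = i^{k-1-2i} \to 0$ (for $i=2$ it is $0$ since $l_1 = 0$). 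The only step needing real care is $(\ref{eqn:good3})$, because $|x_{i+1}| = (i+1)^{(i+1)^2+2}$ is a power of $i+1$, not of $i$: writing $(i+1)^{(i+1)^2+2} = i^{(i+1)^2+2}\,(1+1/i)^{(i+1)^2+2}$ and using $(1+1/i)^{(i+1)^2+2} \leq e^{i+5}$, one gets $(|x_{i+1}|/l_i|x_i|)\cdot b_i^k \leq i^{(i^2+2i+3)-(i^2+3i+2)+k}\,e^{i+5} = i^{k+1-i}\,e^{i+5}$, whose logarithm $(k+1)\ln i - i\ln i + i + 5$ is dominated by $-i\ln i$ and so tends to $-\infty$; hence this ratio tends to $0$ as well. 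I expect this base-shift bound in $(\ref{eqn:good3})$ to be the only mildly delicate point; all the other verifications are comparisons of exponents that grow quadratically in $i$.

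With $W$ a BFF and $\{x_i\}$ a $W$-good sequence, the basic sequence and real number built from these data in the paragraph preceding \refmt{thm3.1} are exactly the $Q$ and $x$ of the present statement. Since $k_i = i \to \infty$, \refmt{thm3.1} yields that $x$ is $Q$-normal, completing the proof.
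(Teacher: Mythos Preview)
Your proof is correct and follows essentially the same approach as the paper: you choose the same BFF data (the paper takes $\epsilon_1 = 3/5$ rather than $3/4$, an immaterial difference since $l_1=0$), invoke \refl{l4.2} for the $(\epsilon_i,k_i,\lambda_i)$-normality of $C_{i,i^2}$, and verify $(\ref{eqn:good1})$--$(\ref{eqn:good3})$ via direct growth estimates before applying \refmt{thm3.1}. Your bounds for the three growth conditions are slightly cleaner than the paper's (which factors $(i\pm 1)^{(i\pm 1)^2}/i^{i^2}$ differently), but the structure and level of difficulty are the same.
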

\begin{proof} Let $\epsilon_1=3/5$, $k_1=1$, $p_1=2$ and $\mu_1=\lambda_2$.  For $i \geq 2$, let $\epsilon_i=1/i$, $k_i=i$, $p_i=b_i$, $\mu_i=\lambda_i$ and $W=\{(l_i,b_i,p_i,\epsilon_i,k_i,\mu_i)\}_{i=1}^{\infty}$.  Thus, since $x_i=C_{b,w}$ where $b=i$ and $w=i^2$, by \refl{l4.2}, $x_i$ is $(\epsilon_i,k_i,\lambda_{b_i})$-normal.  

In order to show that $\{x_i\}$ is a $W$-good sequence we need to verify $(\ref{eqn:good1})$, $(\ref{eqn:good2})$ and $(\ref{eqn:good3})$.
Since $k_i \rightarrow \infty$, we let $k$ be an arbitrary positive integer. We will make repeated use of the fact that
\begin{equation}
|x_i|=i^2 \cdot i^{i^2}.
\end{equation}
We first verify $(\ref{eqn:good1})$:
\begin{equation}
\lim_{i \rightarrow \infty} |x_i| \Bigg/ \left( \frac {i^k} {\frac {1} {i-1}-\frac {1} {i}} \right)=\lim_{i \rightarrow \infty} \frac {i^2 \cdot i^{i^2}} {i^k \cdot i(i-1)}=\infty.
\end{equation}
We next verify $(\ref{eqn:good2})$.  Since $l_{i-1}/l_i<1$, $(i-1)^2/i^2<1$ and $(1-1/i)^{i^2}<e^{-i}$,
\begin{equation*}
\lim_{i \rightarrow \infty} \frac {\frac {l_{i-1}} {l_i} \cdot \frac {x_{i-1}} {x_i}} {i^{-1} i^{-k}} \leq \lim_{i \rightarrow \infty} i^{k+1} \cdot 1 \cdot \frac {(i-1)^2} {i^2} \cdot \frac {(i-1)^{(i-1)^2}} {i^{i^2}}
\end{equation*}
\begin{equation}
\leq \lim_{i \rightarrow \infty} i^{k+1} \cdot 1 \cdot (1-1/i)^{i^2} \cdot (i-1)^{-2i+1} \leq \lim_{i \rightarrow \infty} i^{k+1} e^{-i} (i-1)^{-2i+1}=0.
\end{equation}
Lastly, we will verify $(\ref{eqn:good3})$.  Since $(i+1)^2/i^2 \leq 2$, $(1+1/i)^{2i} < e^2$ and $(1+1/i)^{i^2}<e^i$,
\begin{equation*}
\lim_{i \rightarrow \infty} \frac {\frac {1} {l_i} \cdot \frac {|x_{i+1}|} {|x_i|}} {i^{-k}} = \lim_{i \rightarrow \infty} i^{-3i+k} \cdot \frac {(i+1)^2} {i^2} \cdot \frac {(i+1)^{(i+1)^2}} {i^{i^2}}
\end{equation*}
\begin{equation*}
\leq \lim_{i \rightarrow \infty} i^{-3i+k} \cdot 2 \cdot (1+1/i)^{i^2} \cdot (i+1)^{(2i+1)} 
\end{equation*}
\begin{equation}
\leq \lim_{i \rightarrow \infty} 2e^i (1+1/i)^{2i} i^{-i+k} (i+1) \leq \lim_{i \rightarrow \infty} 2(i+1)e^{i+2} \cdot i^{-i+k}=0.
\end{equation}
Since $\lambda_{b_i}$ is $(p_i,b_i)$-uniform, $\{x_i\}$ is a $W$-good sequence and by \refmt{thm3.1} $x$ is $Q$-normal. \end{proof}

\subsection*{Acknowledgements}
I would like to thank Vitaly Bergelson for suggesting this problem, Christian Altomare for offering many useful suggestions and Laura Harvey for her help in editing this paper.

\end{document}